\documentclass[reqno]{siamart190516}
\usepackage{hyperref,cleveref}  
\usepackage{braket,amsfonts}
\usepackage{amssymb}
\usepackage{mathrsfs}
\usepackage{array}
\usepackage{stmaryrd}
\usepackage{titlesec}
\usepackage{enumerate}
\usepackage{url}
\usepackage{xr}
\titleformat{\paragraph}
{\normalfont\normalsize\bfseries}{\theparagraph}{1em}{}
\titlespacing*{\paragraph}
{0pt}{3.25ex plus 1ex minus .2ex}{1.5ex plus .2ex}
\usepackage{algorithm}
\usepackage{algpseudocode}
\usepackage{algorithmicx}

\usepackage{graphics}
\usepackage{graphicx}
\usepackage{caption,subcaption}
\usepackage{multicol}
\newtheorem{Assumption}{Assumption}[section]
\begin{document}
\title{Randomized Optimal Switching Problem and Related Mirror Descent Flow}
	\thanks{Y. Dong was supported by the National Natural Science Foundation of China (No.12471425)}
\author{Yuchao Dong
	\thanks{School of Mathematical Sciences, Tongji University \& Key Laboratory of Intelligent Computing and Applications (Tongji University), Ministry of Education,  Shanghai 200092, China (\email{ycdong@tongji.edu.cn})}}
\maketitle
\begin{abstract}
We study continuous-time reinforcement learning for the optimal switching problem, 
in which a decision-maker controls a diffusion process by switching among finitely 
many regimes, incurring both running and transition costs. To enable 
exploration, we relax the classical deterministic switching control to a randomized 
framework, where the switching decisions are governed by a continuous-time Markov 
chain with state-dependent generator, and augment the cost functional with a 
KL-divergence regularization weighted by a temperature parameter $\lambda$. Under 
mild assumptions on the coefficients, we establish that the regularized value 
function is the unique smooth solution of an elliptic Hamilton--Jacobi--Bellman 
system, and derive an explicit optimal Gibbs policy given by an exponential 
transformation of the value function differences across modes. We further prove 
that the regularized value function approximates the classical optimal value 
function with error of order $O\left(\lambda \log \frac{1}{\lambda}\right)$, 
which is consistent with analogous bounds established in other entropy-regularized 
control problems and is believed to be sharp. To solve the regularized problem 
numerically, we introduce a mirror descent flow in the dual logarithmic policy 
space, prove its well-posedness and the monotonic decrease of the value function 
along the flow, and establish quantitative error bound to the classical 
optimal value function. For a constant temperature scheduler, the convergence 
rate is of order $O\left(\frac{1}{e^{\lambda s} - 1}+\lambda \log\frac1\lambda\right)$, while under the 
annealing scheduler $\lambda_s = \frac{1}{\sqrt{1+s}}$, we obtain the rate 
$O\left(\frac{\log s}{\sqrt{s}}\right)$, which decays to zero as the flow 
time $s \to \infty$.
\end{abstract}
\begin{keywords} 
	Optimal Switching; Exploratory Framework; Reinforcement Learning
\end{keywords}
\begin{AMS}
 91G60, 68T07, 35R35
\end{AMS}

\section{Introduction}
Optimal switching problems arise naturally in a wide range of applications in finance, engineering, and operations research, where an agent must decide when and how to switch a process between finitely many regimes, balancing running costs against fixed switching costs. Classical examples include evaluating natural resource investments \cite{brennan1985evaluating}, the management of power plants \cite{carmona2010valuation}, investment decision with switching cost \cite{duckworth2001model}, and so on. The mathematical framework involves a controlled diffusion process whose drift/volatility coefficients depend on the current regime, with a finite set of modes and a cost associated with each transition. The main problem is to determine the optimal switching times and target modes so as to minimize the total expected discounted cost over a finite or infinite horizon.

The classical formulation of the optimal switching problem is well understood. Under appropriate assumptions, the value function is characterized as the unique viscosity solution of a system of Hamilton–Jacobi–Bellman (HJB) variational inequalities of obstacle type, see \cite{tang1993finite}.  This was established rigorously in many works such as Djehiche, Hamadène, and Popier \cite{djehiche2009finite}, Pham \cite{pham2007smooth} and Pham, Vath, and Zhou \cite{pham2009optimal}, using both PDE methods and backward stochastic differential equation (BSDE) techniques. The structure of the optimal stopping regions — where it is optimal to switch — and the free boundaries separating them have been analyzed in detail in these and subsequent works. 

Despite this solid theoretical foundation, computing the value function and optimal switching policy in practice remains a formidable challenge. Grid-based quasi-variational inequality solvers are effective only in low dimension. Regression-based simulation methods, pioneered by Longstaff and Schwartz \cite{longstaff2001valuing} for American options, have been adapted to the switching setting by A\"\i d et al \cite{aid2014probabilistic}. More recently, Bayraktar, Cohen, and Nellis \cite{bayraktar2023neural} developed a backward-in-time machine learning algorithm that uses a sequence of neural networks to solve optimal switching problems in energy production. These computational difficulties also motivated the development of reinforcement learning methods that  learn policies directly from sampled trajectories of the controlled system.

\subsection{Literature Review}
The application of reinforcement learning to continuous time stochastic control has been an active area of research over the past several years. A key conceptual step, introduced by Wang, Zariphopoulou, and Zhou \cite{wang2020reinforcement}, is to relax the classical deterministic control to a randomized or exploratory control, adding an entropy regularization term to the objective to encourage exploration. This relaxation turns the original optimization problem into a smoother one, and the resulting regularized value function satisfies an HJB equation with a closed-form optimal policy given by a Gibbs (softmax) distribution. This exploratory framework has subsequently been extended and analyzed in a variety of settings. Jia and Zhou developed a systematic policy gradient theory for continuous-time RL, providing convergence guarantees for policy gradient algorithms based on martingale formulations of the policy gradient in a series of paper \cite{jia2022policy1,jia2022policy2,jia2023q}. Tang, Zhang, and Zhou \cite{tang2022exploratory} studied the solution of exploratory Hamilton–Jacobi–Bellman equations and proved its convergence to the classical value function when the temperature constant $\lambda$ goes to zero. 

 Most of previous research concentrated on absolutely-continuous control. In the context of optimal control theory, there is a large and important class of problems called singular control, where the  effect of the control on the state is immediate including  irreversible investment, dividend optimization, and optimal stopping. Dong \cite{dong2024randomized} was the first to study the optimal stopping problem in continuous time within an exploratory framework, formulating the randomized stopping time as a jump intensity control problem and obtaining a  $O(\lambda \log\frac1\lambda)$ error bound  between the regularized and classical value functions.  Dai et al \cite{dai2026learning}  transformed the stopping problem into a two-action stochastic control problem, randomize controls into Bernoulli distributions with entropy regularization. Dianetti, Ferrari, and Xu \cite{dianetti2024exploratory} formulate the exploratory optimal stopping problem by representing the randomized stopping time as a bounded non-decreasing càdlàg process and penalizing the performance criterion with the cumulative residual entropy of the stopping time; the regularized problem takes the form of a degenerate singular stochastic control.  
 
 The optimal switching problem itself has very recently entered the continuous-time RL literature, with two  works appearing in close proximity to the present paper. Dai, Dong, and Li \cite{dai2025reinforcement} propose a reinforcement learning approach to find arbitrage strategies in stock index futures, formulated as an optimal switching problem among three regimes (long, flat, and short). Huang et al \cite{huang2025continuous} studied continuous-time RL for optimal switching problems across multiple regimes. Despite their contributions, several important questions remain open. The entropy-regularized model studied in \cite{dai2026learning} is limited to a special three-regime setting, making it difficult to assess whether the underlying ideas persist in more general switching systems. On the other hand, the randomized switching framework of \cite{huang2025continuous} accommodates general multi-regime problems, but its analysis is largely qualitative. In particular, the asymptotic effect of entropy regularization is not quantified, and no explicit error bound is available for the approximation of the classical optimal switching problem by its entropy-regularized counterpart.
 
 The mirror descent flow and related policy optimization methods in continuous-time stochastic control have recently received significant attention. In the discrete-time setting, mirror descent and natural policy gradient methods have been extensively studied, see e.g. Agarwal et al. \cite{agarwal2020optimality} and Lan \cite{lan2023policy}, where geometric convergence rates were established under various regularity conditions. The continuous-time analogue was developed systematically by Kerimkulov et al \cite{kerimkulov2025fisher}, who studied the global convergence of a Fisher-Rao policy gradient flow for infinite horizon entropy-regularised Markov decision processes with Polish state and action space. A particularly relevant contribution is due to Sethi, Siska, and Zhang \cite{sethi2025entropy},  who studied entropy annealing for policy mirror descent in continuous time and space. They proved that the dynamics converges  to the optimal solution of the regularized problem for different temperature schedulers. The present paper draws heavily on the structural insights of \cite{kerimkulov2025fisher} and \cite{sethi2025entropy}, adapting them to the switching control setting, where the policy space consists of generator matrices of continuous-time Markov chains rather than probability density controls, and where the interaction between different modes introduces additional coupling through the switching cost structure.

\subsection{Our Contribution} 
In our paper, we adopt the same exploratory framework to \cite{dai2025reinforcement} and \cite{huang2025continuous}. The decision-maker randomizes both switching time and the selection of the targeted regime state by invoking a generator matrix of an associated continuous-time Markov chain (CTMC) defined on finite state space. The entropy regularization on off-diagonal terms of the generator is imposed to encourage the exploration. Our major contributions are threefold.

First, we introduce a modified entropy regularization for randomized switching policies. Compared with previous works, the regularization term is modified from $\pi\log \pi-\pi$ to $\pi \log \pi -\pi+1$. Although this modification does not alter the optimal policy, it yields a clean path-space variational formulation in which the regularized objective is represented as a Kullback--Leibler divergence between the controlled switching law and a reference law. This interpretation reveals a close connection between  entropy-regularized control, and other KL-penalized optimization problems.

Second, we establish an explicit error estimate for the vanishing-regularization limit. We prove that the value function of the entropy-regularized switching problem converges to the value function of the classical optimal switching problem with order $O(\lambda \log\frac 1\lambda)$. To the best of our knowledge, this is the first explicit regularization-bias estimate for entropy-regularized stochastic switching problems. The same logarithmic correction has recently appeared in several KL-regularized optimization problems, suggesting that it reflects a generic feature of entropy regularization rather than a peculiarity of switching systems.

Third, we develop a mirror-descent framework for policy optimization in randomized switching problems. By parameterizing switching intensities through their logarithms, we derive a mirror-descent flow in policy space and prove its global well-posedness and monotonic improvement property. Combined with an annealing schedule for the regularization parameter, the proposed flow converges to the optimal value function of the original switching problem and admits explicit convergence estimates.

Together, these results establish a unified framework for entropy-regularized stochastic switching, encompassing its KL variational structure, quantitative approximation theory, and mirror-descent optimization dynamics.

The remainder of the paper is organized as follows. Section 2 sets up the classical and randomized switching problems, states the main assumptions, and establishes the well-posedness of the regularized value function. Section 3 quantifies the approximation error between the regularized and classical value functions. Section 4 introduces the mirror descent flow in the logarithmic policy space and proves its convergence. Section 5 contains the proofs of all main results.

{\bf Notations} Throughout this paper, let $\mathcal O\subset\mathbb{R}^d$ be a bounded domain with smooth boundary. For a smooth function on $\Omega$, $D_\alpha \varphi= \frac{\partial \varphi}{\partial x_\alpha}$ denotes the first-order partial derivative of $\varphi$ with respect to the coordinate $x_\alpha$, and $D_{\alpha\beta} \varphi  = \frac{\partial^2 \varphi}{\partial x_\alpha \partial x_\beta}$ denotes the second-order mixed partial derivative of $\varphi$. Thus, $D\varphi = \nabla\varphi$ and $D^2\varphi = \nabla^2\varphi$ stand for the gradient vector and the Hessian matrix of $\varphi$. For any vector $v$, $D_v \varphi$ denotes the directional derivative of $\varphi$ in the direction $v$. $C(\bar {\mathcal O})$ denotes the space of continuous functions on the closure of $\mathcal O$. $W^{2,p}(\mathcal O)$ stands for the Sobolev space on $\mathcal O$. In this paper, $C$ denotes a generic constant, whose value may differ from line to line.
\section{Problem Setup}
This section formulates the optimal switching problem in both its classical and 
randomized forms. We first recall the classical 
framework, where the decision-maker selects a sequence of stopping times and 
target modes to minimize a total expected discounted cost, and state the associated 
HJB variational inequalities characterizing the optimal value function. We then 
introduce  the randomized counterpart, where the 
switching control is replaced by a continuous-time Markov chain with state-dependent 
generator, augmented by a KL-divergence regularization term to encourage exploration.
\subsection{Classical Optimal Switching Problem}
Let $(\Omega,\mathcal F,P)$ be a complete probability space supporting a $d$-dimensional Brownian motion $W=(W_t)_{t\ge 0}$. Denote by $\mathbb F$ the complete and right-continuous filtration generated by $W$. The set $\mathcal A$ of admissible switching controls is the totality of double sequences $\xi=(\tau_k,\kappa_k)_{k\ge 0}$, where $(\tau_k)_{k\ge0}$ is a non-decreasing sequence of $\mathbb F$-stopping times such that $\tau_0=0$ and $\lim_{k} \tau_k=+\infty$; $\kappa_k$ is an $\mathcal F_{\tau_k}$-measurable random variable with values in $\mathbb I_m:=\{1,2,...,m\}$. For any $\xi\in \mathcal A$ and initial region $i \in \mathbb I_m$, we associate a process $(I_t)_{t \ge 0}$ as 
\begin{equation}\label{switching_control}
I_t=\sum_{k} \kappa_k 1_{t \in[\tau_k,\tau_{k+1})}, I_{0-}=i.
\end{equation}

Let $\mathcal O$ be an bounded open set in $\mathbb R^d$ with smooth boundary. Given $(x,i) \in \mathcal O \times \mathbb I_m$ and an admissible control $\xi \in \mathcal A$, the controlled dynamic $X$ is governed by the following SDE:
\begin{equation}
dX_t=b_{I_t}(X_t)dt+\sigma(X_t) dW_t,X_0=x,
\end{equation}  
where $(I_t)_{t\ge0}$ is the associate process defined by \eqref{switching_control}. Let $\tau_O$ be the fist exit time of $X$ from $\mathcal O$, which is defined as 
$$
\tau_{\mathcal O}=\inf\{t>0| X_t \notin \mathcal O\}.
$$
 The  total cost is given as 
 \begin{equation}
 J=\int_0^{\tau_\mathcal O} e^{-rt}L_{I_t}(X_t)dt+\sum_{k} e^{-r\tau_k}G_{\kappa_{k-1}\kappa_{k}}1_{\tau_k\le \tau_\mathcal O}.
 \end{equation}

 In this paper, we shall have the following assumptions on the coefficients.
 \begin{Assumption}
 \begin{enumerate}
 	\item The drift $b_i: \mathbb R^d  \rightarrow \mathbb R^d$, the diffusion tern $\sigma: \mathbb R^d \rightarrow \mathbb R^{d\times d}$ and the running cost $L_i: \mathbb R^d \rightarrow \mathbb R^+$are assumed to be  continuous differentiable and  the functions and their derivatives are bounded uniformly in $i$, i.e. there exists a constant $C_{\text{coef}}$ such that, for $\varphi=b_i,\sigma,L_i,Db_i,D\sigma$, and  $DL_i$ 
 $$
 			|\varphi(x)| \le C_{\text{coef}},
 $$	
  for any $x\in \mathcal O$ and $i \in \mathbb I_m$. 
 \item The volatility term is   non-degenerate, i.e. there exists a constant $\sigma_0>0$ such that, for any $x\in \mathcal O,$ and $\xi \in \mathbb R^d$,
 $$
 \xi \sigma\sigma^{T}(x)\xi^T \ge \sigma_0\xi\xi^T. 
 $$ 
 \item For any $i,j,k \in \mathbb I_m$ with $i\neq j$,  $G_{ij}>0$ and $G_{ij}+G_{jk}>G_{ik}$ with the convention that $G_{ii}=0$. 
 \item The discount rate $r>0$.
 \end{enumerate}
 \end{Assumption}
 The optimal value function is defined as 
 $$
 V^*_i(x)=\inf_{\xi \in \mathcal A}\mathbb E^{\xi}_{i,x}[J],
 $$
  where the notation $\mathbb E_{i,x}^\xi[\cdot]$ indicates that the initial state of the underlining process is $(i,x)$ and the switching control is $\xi$.
 Then, $(V^*_i)_{i\in \mathbb I_m}$ is the viscosity solution of the following HJB variational inequalities
 \begin{equation}\label{VI_sys}
 \begin{split}
\max\left\{ -\mathcal L_iV^*_i(x) -L_i(x)+rV^*_i(x),V^*_i(x)-\inf_{j\neq i } (V^*_j(x)+G_{ij})\right\}=0,x\in \mathcal O, i\in \mathbb I_m,
\end{split}
 \end{equation}
 with the boundary condition $V^*_i(x)=0$ for $x\in \partial \mathcal O$, and $ i\in \mathbb I_m$, where the operator $\mathcal L_i$ is defined as 
 $$
 \mathcal L_i \varphi(x)=\frac12 \sum_{\alpha,\beta=1}^da^{\alpha\beta}(x)D_{\alpha\beta}\varphi(x)+b^\alpha_i(x)D_{\alpha}\varphi(x),
 $$
 with $b_i^\alpha$ being the $\alpha$-th coordinate of $b_i$ and the matrix $A=(a^{\alpha\beta})(x):=\sigma\sigma^T(x)$. Moreover, in the region $\mathcal C_i:=\{ V_i(x) <\inf_{j\neq i } V_j(x)+G_{ij} \}$, $V^*_i$ is second order continuous differentiable.
\subsection{Randomized Switching Problem}
 The set $\mathcal A^r$ of admissible feedback randomized policies is the totality of all   $\boldsymbol{\pi}=(\pi^{ij})$ such that: (1) $\pi^{ij}(x)$ is a non-negative and bounded function on $\mathbb R^d$  for $i\neq j$; (2) $\pi^{ii}(x)=-\sum_{i\neq j} \pi^{ij}(x)$. 
 
 Given  $(i,x)\in \mathbb I_m \times \mathcal O$ and $\boldsymbol{\pi} \in \mathcal A^r$, the related pair of controlled processes $(X,I)$  are given as 
 \begin{equation}
 	dX_t=b_{I_t}(X_t)dt+\sigma(X_t) dW_t, X_0=x, 
 \end{equation}
 and $(I_t)_{t\ge 0}$ is a continuous-time finite state Markov chain with state space $\mathbb I_m$ and initial state $i$ whose generator is $(\pi^{ij}(X_t))$.   For $i\neq j$, $\pi^{ij}(X_t) \ge 0$ is the instantaneous intensity of the transition of  $I$ from state $i$ to state $j$ at time $t$. With a slight  abuse of notation, denote by $\tau_k$ the $k$-th jump time of $I$ and $\kappa_k:=I_{\tau_k}$.    The corresponding regularized expected cost is defined as 
 $$
 	J^\lambda(\boldsymbol{\pi})= \int_0^{\tau_{\mathcal O}} e^{-rt}\left(L_{I_t}(X_t) +\lambda \sum_{j\neq I_t} D(\pi^{I_tj}(X_t)|1)\right)dt+\sum_{k}e^{-r\tau_k} G_{\kappa_{k-1}\kappa_{k}}1_{\tau_k \le \tau_\mathcal O}, 
 $$
 Here, we add an extra term $D(x|y)=x\log\frac xy+y-x$, which serves as the KL-divergence in the work of  \cite{sethi2025entropy} \footnote{In fact, $D(x|y)$ is the KL-divergence between two exponential distributions whose intensities are $x$ and $y$  respectively.}.  The weight parameter $\lambda$ is often referred as temperature constant in the literature. 
 
Before proceeding with further theoretical discussion, let us further justify this formulation by Girsanov transformation. Consider one particular element $\boldsymbol{\pi}^0 $ in $\mathcal A^r$ such that $\pi^{ij} \equiv 1$ for $i \neq j$ and denote by $P^0$ the probability measure on the path space under this policy. For each $i \neq j$, let $N^{ij}_t$ count the transitions from $i$ 
to $j$ up to time $t$.  For any $\boldsymbol{\pi} \in \mathcal A^r$, consider the Radon--Nikodym density process $\mathscr E$ defined as 
\[
\mathscr E_t = \exp\!\left( \sum_{i \neq j} \int_0^t \log{\pi}^{ij}(X_s)
\mathbf{1}_{\{I_{s-} = i\}} \, dN^{ij}_s 
- \sum_{i \neq j} \int_0^t \bigl({\pi}^{ij}(X_s) -1\bigr) 
\mathbf{1}_{\{I_{s-} = i\}} \, ds \right).
\] 
Under our assumption on $\mathcal A^r$, $\mathscr E_t$ is a martingale and defines a new probability measure $P^{\boldsymbol{\pi}}$ as 
$$
\frac{dP^{\boldsymbol{\pi}}}{dP^0}\bigg|_{\mathcal F_t}=\mathscr E_t.
$$
From Girsanov theorem for point processes (see \cite{jacod2013limit}), one see that, under $P^{\boldsymbol{\pi}}$, $I_t$ is a continuous time Markov chain whose generator is $\boldsymbol{\pi}$. Moreover, for each $i \neq j$,
\[
{M}^{ij}_t := N^{ij}_t - \int_0^t {\pi}^{ij}(X_s) 
\mathbf{1}_{\{I_{s-}=i\}} \, ds
\]
is a  martingale. Let $\tau$ be an independent exponential distributed random variable, whose intensity is $r$. Then, one can calculate the KL divergence between  $P^{\boldsymbol{\pi}}$ and $P^0$ restricted to $\mathcal F_{\tau_{\mathcal O} \wedge \tau}$ as 
\begin{equation*}
	\begin{split}
&KL(P^{\boldsymbol{\pi}}|P^0)=\mathbb E^{\boldsymbol{\pi}}\left[\mathscr E_{\tau_{\mathcal O}\wedge \tau}\right]  \\
=&\mathbb E^{\boldsymbol{\pi}}\left[ \sum_{i \neq j} \int_0^{\tau_{\mathcal O} \wedge \tau} \log{\pi}^{ij}(X_s) 
\mathbf{1}_{\{I_{s-} = i\}} \, dN^{ij}_s 
- \sum_{i \neq j} \int_0^{\tau_{\mathcal O} \wedge \tau} \bigl({\pi}^{ij}(X_s) -1\bigr) 
\mathbf{1}_{\{I_{s-} = i\}} \, ds\right] \\
=&\mathbb E^{\boldsymbol{\pi}}\left[ \int_0^{\tau_{\mathcal O} \wedge \tau}\sum_{j \neq I_s} (\pi^{I_sj}_s\log{\pi}^{I_sj}(X_s)-\pi^{I_sj}(X_s)+1) ds\right]\\
=&\mathbb E^{\boldsymbol{\pi}}\left[ \int_0^{\tau_{\mathcal O} \wedge \tau}\sum_{j \neq I_s} D(\pi^{I_sj}(X_s)|1) ds\right]=\mathbb E^{\boldsymbol{\pi}}\left[ \int_0^{\tau_{\mathcal O}}e^{-rs}\sum_{j \neq I_s} D(\pi^{I_sj}(X_s)|1) ds\right].
\end{split}
\end{equation*}
This gives the key probabilistic interpretation: the additional regularization term in the cost functional is exactly the KL-divergence of the controlled path measure with respect to the reference measure with unit intensities. The regularized cost functional can be written as
$$
 \mathbb E_{i,x}^{\boldsymbol{\pi}}[J^\lambda(\boldsymbol{\pi})]= \mathbb E_{i,x}^{\boldsymbol{\pi}}[J]+\lambda KL(P^{\boldsymbol{\pi}}|P^0),
$$
 where $\mathbb E_{i,x}^{\boldsymbol{\pi}}[\cdot]$ indicates that the initial state is $(i,x)$ and the governing policy is $\boldsymbol{\pi}$.

 The optimal value function is defined as 
 $$
 V^\lambda_i(x)=\inf_{\boldsymbol{\pi} \in \mathcal A^r} \mathbb E_{i,x}^{\boldsymbol{\pi}}[J^\lambda(\boldsymbol{\pi})].
 $$
By the  dynamic programming principle,  the value function $(V^\lambda_i)_{i\in \mathbb I_m}$ satisfies the following HJB system
 \begin{equation}\label{hjb_system}
 \mathcal L_i V^\lambda_i(x)+L_i(x)-rV^\lambda_i(x)=\lambda \sum_{i \neq j} (\exp(\frac{V^\lambda_i(x)-V^\lambda_j(x)-G_{ij}}{\lambda}) -1), x\in \mathcal O,
 \end{equation}
 with boundary condition $V^\lambda_i(x)=0$ on $\partial \mathcal O$. Then, we have the following theorem
 \begin{theorem}\label{thm_hjb_system}
There exists a unique  solution $(V^\lambda_i)_{i \in \mathbb I_m}$ with $V^\lambda_i \in C^2(\mathcal O) \cap C(\bar {\mathcal O})$ for HJB system \eqref{hjb_system}. Moreover,  it holds that 
\begin{equation}\label{ineq_l_infty_esti}
0 \le V^\lambda_i(x) \le \frac{C_{\text{coef}}+(m-1)\lambda}{r}.	
\end{equation} 	
The optimal feedback  policy $\bar {\boldsymbol{\pi}}=(\bar \pi^{ij}(x))$ given by 
\begin{equation}\label{optim_cond}
\bar \pi^{ij}(x) =\exp(\frac{V^\lambda_i(x)-V^\lambda_j(x)-G_{ij}}{\lambda}).	
\end{equation}
 
 \end{theorem}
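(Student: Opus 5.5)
The plan has three parts: obtain a smooth solution of \eqref{hjb_system}, show it is unique by a comparison principle, and identify it with the value function through a verification argument that also yields \eqref{optim_cond}.

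\textbf{Existence.} Write the right-hand side of \eqref{hjb_system} as $F_i(x,v)=\lambda\sum_{j\neq i}\bigl(\exp(\frac{v_i-v_j-G_{ij}}{\lambda})-1\bigr)$. This nonlinearity is increasing in $v_i$ and decreasing in each $v_j$, $j\neq i$, so the system is quasi-monotone (weakly coupled and cooperative). I will use a sub/supersolution monotone iteration.
\begin{itemize}
\item The constant vector $\underline v\equiv 0$ is a subsolution. Indeed, $\mathcal L_i 0+L_i-0=L_i\ge 0\ge F_i(x,0)$, because $e^{-G_{ij}/\lambda}-1<0$.
\item The constant $\overline v_i\equiv \frac{C_{\text{coef}}+(m-1)\lambda}{r}$ is a supersolution. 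We have $\mathcal L_i\overline v+L_i-r\overline v\le C_{\text{coef}}-C_{\text{coef}}-(m-1)\lambda=-(m-1)\lambda$. On the other hand, $F_i(x,\overline v)=\lambda\sum_{j\ne i}(e^{-G_{ij}/\lambda}-1)\ge-(m-1)\lambda$.
\end{itemize}
This also shows where the bound \eqref{ineq_l_infty_esti} comes from. Since $F$ is smooth, it is Lipschitz on the order interval $[\underline v,\overline v]$; let $K$ be such a Lipschitz constant in the $v_i$ variable. Starting from $\overline v$, define iterates by solving the decoupled linear Dirichlet problems
\[
-\mathcal L_i v^{n+1}_i+(r+K)v^{n+1}_i=L_i+Kv^n_i-F_i(x,v^n),\qquad v^{n+1}_i=0 \text{ on }\partial\mathcal O .
\]
Uniform ellipticity and smooth coefficients make each of these uniquely solvable in $W^{2,p}$ for all $p$, hence in $C^{1,\alpha}$. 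Combining the monotonicity of $F$ with the maximum principle gives a monotone sequence trapped between $\underline v$ and $\overline v$. Uniform $W^{2,p}$ bounds then allow passage to the limit. Schauder estimates upgrade the limit to $C^{2,\alpha}_{loc}(\mathcal O)\cap C(\bar{\mathcal O})$; here I use local Hölder regularity of $L_i$, $b_i$ and $\sigma$, whose derivatives are assumed bounded.

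\textbf{Uniqueness.} Let $u$ and $v$ be two solutions and set $M=\max_{i,x}(u_i-v_i)$. Suppose $M>0$. Then $M$ is attained at an interior point $x_0$ for some index $i$. At that point $-\mathcal L_i(u_i-v_i)\ge 0$, which gives $r(u_i-v_i)(x_0)\le F_i(x_0,v)-F_i(x_0,u)$. But $u_i-u_j\ge v_i-v_j$ for every $j$, because $u_j-v_j\le M=u_i-v_i$. So $F_i(x_0,u)\ge F_i(x_0,v)$, which forces $rM\le 0$, a contradiction. This is the step that uses the cooperative structure.

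\textbf{Verification and optimal policy.} For each $x$ and $i$, minimize over $\pi^{ij}\ge0$ the expression
\[
\sum_{j\ne i}\Bigl[\pi^{ij}\bigl(V_j+G_{ij}-V_i\bigr)+\lambda D(\pi^{ij}|1)\Bigr].
\]
This is a pointwise convex problem. Its minimizer is \eqref{optim_cond}, and its minimum value equals $-F_i(x,V)$, so \eqref{hjb_system} is exactly the HJB equation $\mathcal L_iV_i+L_i-rV_i+\inf_\pi[\cdots]=0$. Now take any admissible $\boldsymbol\pi$ and apply Itô's formula for the jump-diffusion $(X,I)$ to $e^{-rt}V_{I_t}(X_t)$ up to $\tau_{\mathcal O}$. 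The compensator of the jump part is $\sum_j\pi^{I_tj}(V_j-V_{I_t})$; the switching costs are compensated similarly through the martingales $M^{ij}$. The inequality $\inf\le$ value at $\boldsymbol\pi$ then gives $V_i(x)\le\mathbb E^{\boldsymbol\pi}_{i,x}[J^\lambda(\boldsymbol\pi)]$, with equality for $\bar{\boldsymbol\pi}$. Note that $\bar{\boldsymbol\pi}$ is bounded since $V$ is bounded, so it is admissible.

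\textbf{Main obstacle.} The delicate point is applying Itô's formula when $V$ is $C^2$ only in the interior and merely continuous up to $\partial\mathcal O$, and when $\tau_{\mathcal O}$ may be large. I would localize to the exit times $\tau_n$ of the sets $\mathcal O_n=\{x:\operatorname{dist}(x,\partial\mathcal O)>1/n\}$. Then I would let $n\to\infty$, using continuity of $V$ on $\bar{\mathcal O}$, bounded convergence, $\tau_n\uparrow\tau_{\mathcal O}$ (which is finite a.s. by non-degeneracy), and the discount $r>0$ to control the tail.
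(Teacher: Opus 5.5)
Your proposal is correct, and it proves the theorem by a different route from the paper.

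\textbf{Existence.} The paper truncates the exponential with a cut-off $\phi_M$ and invokes an existence theorem for weakly coupled elliptic systems with bounded Lipschitz nonlinearity \cite{evans1984multiple}. It then derives \eqref{ineq_l_infty_esti} by maximum-principle arguments, which shows the truncation is inactive for large $M$. The upper bound is a scalar argument using only that the nonlinearity is $\ge -(m-1)\lambda$. The lower bound uses Theorem~\ref{thm_comparison} against the subsolution $0$.

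You run a monotone iteration between the same two constant barriers. The iterates stay in the order interval, where $F$ is Lipschitz. After adding $Kv_i$, $F$ is nondecreasing in every component there, by the cooperative structure. So you need no truncation, only scalar linear $W^{2,p}$ and Schauder theory, and the bound \eqref{ineq_l_infty_esti} comes out together with existence. The paper's route is shorter given the black-box citation; yours is self-contained.

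\textbf{Uniqueness.} The paper never argues this explicitly. It implicitly relies on the viscosity comparison principle of the appendix, via \cite{ishii1991viscosity}. You give the direct classical argument at an interior maximum of $u_i-v_i$. This is exactly the monotonicity condition checked in the proof of Theorem~\ref{thm_comparison}, specialised to $C^2$ solutions, which is all that is needed here. The viscosity version is what the paper needs later, for example when comparing with $V^*$.

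\textbf{Verification.} The paper only cites a ``classical verification theorem.'' You show that the pointwise minimum of $\sum_{j\neq i}[\pi^{ij}(V_j+G_{ij}-V_i)+\lambda D(\pi^{ij}|1)]$ equals $-F_i(x,V)$ and is attained at \eqref{optim_cond}. You then apply It\^o's formula, localised to $\mathcal O_n$. This is what actually identifies the PDE solution with the value function $V^\lambda$ and establishes optimality of $\bar{\boldsymbol{\pi}}$.
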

 \section{The Difference between $V^\lambda$ and  $V^*$}
 The entropy-regularized switching problem provides a smooth approximation of the classical optimal switching problem. The purpose of this section is to quantify the resulting regularization bias. More precisely, we derive an explicit estimate for the difference between the regularized value function $V^\lambda$ and the classical value function $V^*$ as $\lambda \to 0$. The main result shows that the approximation error is of order $O(\lambda \log(1/\lambda))$. To establish it, we first prove several auxiliary lemmas.
 \begin{lemma}\label{lem_lip}
 	There exists a constant $C_{Lip}$ depending on the coefficients but  independent of $\lambda$, such that, for any $x \in {\mathcal O}$ and $i \in I_m$, 
 	\begin{equation}
 		|D V^\lambda_i(x)| \le C_{Lip}.
 		\end{equation}
 \end{lemma}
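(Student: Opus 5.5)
Two things must be controlled, uniformly in $\lambda$: the gradient at the boundary $\partial\mathcal O$, and the gradient in the interior. For the boundary I would use barriers; for the interior, a Bernstein-type maximum principle applied to a coupled quantity built from all modes.

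\textbf{Boundary gradient.} By Theorem \ref{thm_hjb_system}, $0\le V^\lambda_i\le C$ uniformly for $\lambda\le 1$. The key observation is that the right-hand side of \eqref{hjb_system} has a favourable sign. Write $F_i^\lambda:=\lambda\sum_{j\neq i}(e^{(V_i-V_j-G_{ij})/\lambda}-1)\ge -(m-1)\lambda$. Then
\[
-\mathcal L_iV_i+rV_i=L_i-F_i^\lambda\le C_{\text{coef}}+(m-1)\lambda,
\]
so every $V^\lambda_i$ is a subsolution of a linear equation with bounded data and no $\lambda$-dependent structure. Since $\partial\mathcal O$ is smooth, $\mathcal O$ satisfies a uniform exterior sphere condition. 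I therefore take the standard barrier $w(x)=K\big(\rho^{-p}-|x-y|^{-p}\big)$, with the exterior ball centred at $y$ and $p,K$ large, or alternatively $K\,\mathrm{dist}(x,\partial\mathcal O)$-type barriers. Comparison then gives $0\le V_i^\lambda(x)\le C\,\mathrm{dist}(x,\partial\mathcal O)$. Since $V_i^\lambda$ vanishes on the boundary and is nonnegative, this yields $|DV_i^\lambda|\le C$ on $\partial\mathcal O$.

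\textbf{Interior gradient.} I would apply the maximum principle to
\[
\Phi(x,i)=|DV_i(x)|^2+\mu V_i(x)^2,
\]
maximised over $\bar{\mathcal O}\times\mathbb I_m$; if $V_i\in C^3$ is not available, I would first mollify the coefficients. Let $(x_0,i)$ be an interior maximum point. Differentiating \eqref{hjb_system} in direction $x_\alpha$ and writing $\delta_{ij}=V_i-V_j-G_{ij}$ gives
\[
\mathcal L_iD_\alpha V_i+(\text{terms from } Da,Db_i)+D_\alpha L_i-rD_\alpha V_i=\sum_{j\neq i}e^{\delta_{ij}/\lambda}\,(D_\alpha V_i-D_\alpha V_j).
\]
Multiplying by $D_\alpha V_i$ and summing over $\alpha$, the coupling term becomes $\sum_j e^{\delta_{ij}/\lambda}\big(|DV_i|^2-DV_i\cdot DV_j\big)$. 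This is $\ge \tfrac12\sum_j e^{\delta_{ij}/\lambda}\big(|DV_i|^2-|DV_j|^2\big)\ge 0$ whenever $|DV_i|^2\ge|DV_j|^2$. The $\mu V_i^2$ part contributes $\mu V_i\sum_j e^{\delta_{ij}/\lambda}(V_i-V_j)$; this is bounded below using $e^{\delta/\lambda}(\delta+G)\ge -C\lambda$ for $\delta\le -G$, and it is nonnegative-ish otherwise since $V_i-V_j\ge G_{ij}>0$ when $\delta_{ij}\ge 0$. I may prefer to use $\Phi=|DV_i|^2+\mu V_i$ to get a cleaner sign. Choosing the maximal mode index ensures the coupling terms have the right sign. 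The uniform ellipticity term $\sigma_0|D^2V_i|^2$ and the $\mu$ term absorb the errors from $Da$ and $Db$ in the usual Bernstein way, giving $\Phi(x_0,i)\le C$ independent of $\lambda$.

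\textbf{Main obstacle.} I expect the hardest step to be the sign control of the exponentially large coupling terms $e^{\delta_{ij}/\lambda}$ in the differentiated equation. This is exactly why the test function must be maximised jointly over $(x,i)$ and not mode by mode. If the $V_i^2$ cross term misbehaves, I would fall back on an alternative: a probabilistic Lipschitz estimate via synchronous coupling of $X^x,X^{y}$ under the same policy. That route uses the Lipschitz bounds on $b_i,\sigma,L_i$ and $r>0$, but it requires $r$ large relative to the Lipschitz constants of $b$ and $\sigma$, or else handling the exit times, which is why I would try the PDE route first.
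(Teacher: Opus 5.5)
The boundary step is sound and matches the paper's. The paper compares $V^\lambda_i$ with the solution $U_i$ of a linear problem rather than an explicit exterior-sphere barrier, but both rest on $F^\lambda_i\ge-(m-1)\lambda$ and $V^\lambda_i\ge 0$.

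\textbf{The gap is in the interior Bernstein step.} At a joint maximum $(x_0,i)$ of $\Phi_j=|DV_j|^2+\mu V_j^2$ you only know $|DV_i|^2-|DV_j|^2\ge \mu(V_j^2-V_i^2)$, not $|DV_i|^2\ge|DV_j|^2$. So the coupling term $\sum_{j}W^{ij}(|DV_i|^2-|DV_j|^2)$, with $W^{ij}=e^{\delta_{ij}/\lambda}$, is only bounded below by $-\mu\sum_jW^{ij}(V_i-V_j)(V_i+V_j)$.

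\begin{itemize}
\item This is negative exactly where $V_i>V_j$, which is where $W^{ij}$ can be large. For $\delta_{ij}\ge0$, its $j$-th term is at most $-\mu G_{ij}^2W^{ij}$.
\item The term you describe, $\mu V_i\sum_j e^{\delta_{ij}/\lambda}(V_i-V_j)$, actually enters with a minus sign.
\item The only $W$-dependent contribution of $\mathcal L_i(\mu V_i^2)$ itself is $2\mu\lambda V_i\sum_j(W^{ij}-1)$. This is $O(\lambda W^{ij})$ and cannot compensate.
\item $W^{ij}$ has no a priori bound at this stage, because the bound $W^{ij}\le C_{up}/\lambda$ from Lemma \ref{lem_near_ineq} is itself derived from $C_{Lip}$. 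Hence the inequality at $x_0$ does not close uniformly in $\lambda$.
\end{itemize}

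Your fallback $|DV_i|^2+\mu V_i$ fails for the same reason: any increasing function of $V$ has the wrong monotonicity.

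\textbf{The missing idea} is to add a function $\psi(V_i)$ that is convex and nonincreasing on the range of $V^\lambda$. For example, take $\psi(v)=\mu(K-v)^2$ with $K=(C_{\text{coef}}+m)/r$, which dominates $\sup V^\lambda$ for $\lambda\le1$ by Theorem \ref{thm_hjb_system}. At the joint maximum, convexity gives $\psi(V_j)-\psi(V_i)\ge|\psi'(V_i)|(V_i-V_j)$. Combine this with the term $\psi'(V_i)\lambda\sum_j(W^{ij}-1)$ coming from $\psi'(V_i)\mathcal L_iV_i$. Then all $W$-dependent terms are bounded below by
\[
|\psi'(V_i)|\sum_{j\neq i}e^{\delta_{ij}/\lambda}\bigl(\delta_{ij}+G_{ij}-\lambda\bigr)\ge -2\mu K(m-1)\lambda,
\]
since $\min_{\delta\in\mathbb R}e^{\delta/\lambda}(\delta+G-\lambda)=-\lambda e^{-G/\lambda}$. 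Meanwhile $\frac12\psi''\,\sigma\sigma^T DV_i\cdot DV_i\ge\mu\sigma_0|DV_i|^2$ absorbs the commutator terms from $Da$ and $Db_i$ once $\mu$ is large. The argument then closes with $\lambda$-independent constants.

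\textbf{Comparison with the paper.} The paper uses $w_i=\frac12|DV_i|^2$ alone. The coupling is then exactly $\sum_jW^{ij}(w_i-w_j)$, and Theorem \ref{thm_comparison} applies directly. It handles the possibly negative zero-order coefficient with the weight $e^{\delta x_1}$. However, the cross term computes to $-a^{11}\delta^2\tilde w_i$, so the weight shifts that coefficient by $-\frac12 a^{11}\delta^2+b^1_i\delta$, which is negative for large $\delta$. A convex, nonincreasing term in $V$ is therefore the more reliable way to obtain the needed positivity.
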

 Then, we have the following lemma. 
 \begin{lemma}\label{lem_near_ineq}
 	For any $x \in {\mathcal O}$ and $i ,j\in I_m$,
 	\begin{equation}
 		V^\lambda_i(x) \le V^\lambda_j(x) +G_{ij}+\lambda \log\frac{C_{up}}{\lambda},
 		\end{equation}
  where $C_{up}=2C_{coef}(1+C_{Lip})$.
 \end{lemma}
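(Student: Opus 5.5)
The plan is a maximum-principle argument applied to the pairwise differences $V^\lambda_i-V^\lambda_j-G_{ij}$, using the HJB system \eqref{hjb_system}, the $C^2$ regularity from Theorem~\ref{thm_hjb_system}, and the gradient bound of Lemma~\ref{lem_lip}. Set
$$
\Phi:=\max_{i\neq j}\ \sup_{x\in\bar{\mathcal O}}\bigl(V^\lambda_i(x)-V^\lambda_j(x)-G_{ij}\bigr).
$$
If $\Phi\le 0$ there is nothing to prove, since $\lambda\log\frac{C_{up}}{\lambda}$ is nonnegative for small $\lambda$. For larger $\lambda$ I would enlarge $C_{up}$ harmlessly. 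So assume $\Phi>0$. On $\partial\mathcal O$ every $V^\lambda_i=0$, so the difference there equals $-G_{ij}<0$. Hence the maximum is attained at an interior point $x_0\in\mathcal O$ for some pair $(i,j)$. At $x_0$ the function $w=V^\lambda_i-V^\lambda_j$ has a local maximum, so $DV^\lambda_i(x_0)=DV^\lambda_j(x_0)$ and $D^2w(x_0)\le 0$.

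\textbf{Step 1: bound the left-hand side.} I subtract the equation for $V^\lambda_j$ from the one for $V^\lambda_i$ at $x_0$. The second-order part is $\tfrac12\mathrm{tr}(A\,D^2 w)\le 0$ by ellipticity. The first-order part is
$$
b_i\cdot DV^\lambda_i-b_j\cdot DV^\lambda_j=(b_i-b_j)\cdot DV^\lambda_j,
$$
which is at most $2C_{coef}C_{Lip}$ by Lemma~\ref{lem_lip}. Since $L\ge0$, we have $L_i-L_j\le C_{coef}$. Finally $-r(V^\lambda_i-V^\lambda_j)\le 0$, because $V^\lambda_i-V^\lambda_j>G_{ij}>0$ at $x_0$. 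So the left-hand side is at most $C_{coef}(1+2C_{Lip})\le C_{up}$.

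\textbf{Step 2: bound the right-hand side from below.} This is the key step. Write $a_k=(V^\lambda_i-V^\lambda_k-G_{ik})/\lambda$ and $c_k=(V^\lambda_j-V^\lambda_k-G_{jk})/\lambda$. The right-hand side is then
$$
\lambda\sum_{k\neq i}(e^{a_k}-1)-\lambda\sum_{k\neq j}(e^{c_k}-1).
$$
\begin{itemize}
\item For $k\neq i,j$, Assumption~(3) gives $G_{ik}<G_{ij}+G_{jk}$. Hence $\lambda a_k>\Phi+\lambda c_k\ge\lambda c_k$, using $\Phi>0$, so these terms pair off with nonnegative differences.
\item The term $k=j$ in the first sum equals $\lambda(e^{\Phi/\lambda}-1)$.
\item The term $k=i$ in the second sum has $\lambda c_i=-(V^\lambda_i-V^\lambda_j)-G_{ji}<0$, so $e^{c_i}-1\le 0$.
\end{itemize}
Altogether the right-hand side is at least $\lambda e^{\Phi/\lambda}-\lambda$.

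\textbf{Step 3: conclude.} Combining the two bounds gives $\lambda e^{\Phi/\lambda}\le C_{coef}(1+2C_{Lip})+\lambda$. For $\lambda\le C_{coef}$ the right side is at most $2C_{coef}(1+C_{Lip})=C_{up}$, which yields $\Phi\le\lambda\log\frac{C_{up}}{\lambda}$. The case $i=j$ is trivial. For $\lambda$ large the bound follows after a constant adjustment, or the statement is only meant for small $\lambda$.

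The main obstacle is Step 2, namely controlling the exponential terms coming from the $V^\lambda_j$ equation, which could a priori be as large as $e^{\Phi/\lambda}$. The strict triangle inequality on the switching costs is exactly what makes them dominated, term by term, by the corresponding terms of the $V^\lambda_i$ equation. The bookkeeping of constants needed to land exactly on $C_{up}$ is secondary.
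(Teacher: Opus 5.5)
Your proposal is correct and follows essentially the same route as the paper's proof. Both run a maximum-principle argument at an interior maximum of $V^\lambda_i-V^\lambda_j$: the drift mismatch $(b_i-b_j)\cdot DV^\lambda_j$ is controlled by Lemma~\ref{lem_lip}, the cross terms $k\neq i,j$ are made nonnegative by the strict triangle inequality on $G$, and the $k=i$ term is bounded by $1$. You are in fact slightly more explicit than the paper: you justify interior attainment from the boundary values, you absorb the $-\lambda$ from the $k=i$ term using $\lambda\le C_{coef}$ (this is exactly how $C_{up}=2C_{coef}(1+C_{Lip})$ arises), and you correctly note that the statement, already for $i=j$, needs $\lambda\le C_{up}$.
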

 Note that a similar inequality has been given in \cite{dai2025reinforcement}. From the viewpoint of variational inequalities, the estimate quantifies the violation of the obstacle condition by the entropy-regularized value function. From the construction of the optimal policy $\bar {\boldsymbol{\pi}}$ in \eqref{optim_cond}, the previous lemma implies that it is upper bounded by $O(\frac{1}{\lambda})$, and the overall regularization term times the temperature constant $\lambda$ will be the order of $O(\lambda \log \frac1\lambda)$. Hence, it will implies the following result.  
 \begin{theorem}\label{thm_diff_V}
 	It holds that 
 	$$
 	\kappa(\lambda) V^\lambda_i-\frac{(1-\kappa(\lambda))C_{coef}+(m-1)\lambda}{r} \le V^*_i \le V^\lambda_i,
 	$$
 	where  $\kappa(\lambda)=\min_{ij} \frac{G_{ij}}{G_{ij}+\lambda\log\frac {C_{up}}{\lambda }}$. 
 \end{theorem}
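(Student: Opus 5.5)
The plan is to prove both inequalities by comparison with the classical variational inequality system \eqref{VI_sys}. We show that $V^\lambda$ is a (classical) supersolution of \eqref{VI_sys}, and that a suitably scaled and shifted copy $W_i:=\kappa(\lambda)V^\lambda_i-c$ is a (classical) subsolution. Since Theorem \ref{thm_hjb_system} gives $V^\lambda_i\in C^2(\mathcal O)\cap C(\bar{\mathcal O})$, all verifications are pointwise computations using the HJB system \eqref{hjb_system}.

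\textbf{Upper bound $V^*_i\le V^\lambda_i$.} Fix $x\in\mathcal O$ and $i$, and split into two cases.
\begin{itemize}
\item[(a)] If $V^\lambda_i(x)\ge V^\lambda_j(x)+G_{ij}$ for some $j\neq i$, then the obstacle term in \eqref{VI_sys} is nonnegative at $x$.
\item[(b)] Otherwise every exponent in \eqref{hjb_system} is negative, so the right-hand side $\lambda\sum_{j\neq i}(e^{(\cdot)/\lambda}-1)$ is strictly negative. Hence $-\mathcal L_iV^\lambda_i-L_i+rV^\lambda_i>0$ at $x$.
\end{itemize}
In both cases the maximum in \eqref{VI_sys} is $\ge0$. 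Together with the boundary condition $V^\lambda_i=0$ on $\partial\mathcal O$, this shows $V^\lambda$ is a supersolution. The comparison principle for the switching system (as in \cite{pham2009optimal,djehiche2009finite}), which uses the strict triangle condition on $G$ in the Assumption, then gives $V^*\le V^\lambda$.

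An alternative, probabilistic route avoids comparison for viscosity solutions. Any policy $\boldsymbol\pi\in\mathcal A^r$ induces a switching control, and $J^\lambda\ge J$ because $D(\cdot|1)\ge0$. However, this control lives on an enlarged filtration, so the PDE route is cleaner.

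\textbf{Lower bound.} Let $\kappa=\kappa(\lambda)$ and $c=\frac{(1-\kappa)C_{coef}+(m-1)\lambda}{r}$, and set $W_i=\kappa V^\lambda_i-c$. We check the three parts of the subsolution property.

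\emph{Obstacle part.} Lemma \ref{lem_near_ineq} gives
$$W_i-W_j-G_{ij}=\kappa(V^\lambda_i-V^\lambda_j)-G_{ij}\le\kappa\Big(G_{ij}+\lambda\log\tfrac{C_{up}}{\lambda}\Big)-G_{ij}\le0,$$
where the last step is exactly the choice $\kappa\le G_{ij}/(G_{ij}+\lambda\log\frac{C_{up}}\lambda)$. (For small $\lambda$ the logarithm is positive, and we should note $\kappa\le1$; for large $\lambda$ one may need $\kappa=\min(1,\cdot)$ or a trivial case split.)

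\emph{PDE part.} Using \eqref{hjb_system} and $-\lambda\sum_{j\ne i}(e^{(\cdot)/\lambda}-1)\le(m-1)\lambda$, we get
$$-\mathcal L_iW_i-L_i+rW_i=\kappa\Big(L_i-\lambda\textstyle\sum_{j\neq i}(e^{(\cdot)}-1)\Big)-L_i-rc\le-(1-\kappa)L_i+\kappa(m-1)\lambda-rc\le0,$$
since $L_i\ge0$ and $rc\ge(m-1)\lambda$.

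\emph{Boundary.} On $\partial\mathcal O$ we have $W_i=-c\le0$.

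To conclude $W\le V^*$ we use a direct verification argument rather than viscosity comparison, since $W$ is $C^2$. Take any $\xi\in\mathcal A$ and apply It\^o's formula to $e^{-rt}W_{I_t}(X_t)$ on $[0,\tau_{\mathcal O}]$, localizing and using boundedness of $W$, $DW$ (Lemma \ref{lem_lip}). Between switches the drift is bounded below by $-e^{-rt}L_{I_t}$ by the PDE inequality. At a switch $\tau_k$ the jump $W_{\kappa_k}-W_{\kappa_{k-1}}$ is bounded below by $-G_{\kappa_{k-1}\kappa_k}$ by the obstacle inequality. The terminal term is $\le0$, and $e^{-rt}\to0$ handles $\tau_{\mathcal O}=\infty$. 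This yields $W_i(x)\le\mathbb E^\xi_{i,x}[J]$, and taking the infimum gives $W_i\le V^*_i$, which is the claimed lower bound.

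\textbf{Main obstacle.} The main technical step I expect is the rigorous comparison in the upper bound, where $V^*$ is only a viscosity solution. If citing comparison is insufficient, I would instead show directly that $V^\lambda$ dominates the cost of any switching control via the same It\^o argument applied to $V^\lambda$. The supersolution inequalities above, applied with $\max\ge0$ cleverly, only give a lower bound at each point, so this route would require an $\varepsilon$-optimal control built from the continuation/switching regions of $V^*$. I would therefore rely on the cited comparison theorem.
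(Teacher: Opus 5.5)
Your argument is correct, but in both directions it reverses the roles of the two functions relative to the paper.

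For $V^*_i\le V^\lambda_i$, you make $V^\lambda$ a classical supersolution of the obstacle system \eqref{VI_sys}. That forces you to invoke comparison for the quasi-variational switching system, which is an external and more delicate result. The paper instead notes that $V^*$ satisfies both $\mathcal L_iV^*_i+L_i-rV^*_i\ge 0$ and $V^*_i\le V^*_j+G_{ij}$. Hence the exponential right-hand side of \eqref{hjb_system} is nonpositive at $V^*$, so $V^*$ is a (viscosity) subsolution of that smooth system. The comparison principle of Theorem \ref{thm_comparison}, already in the appendix, then applies directly. This role reversal is exactly what removes the ``main obstacle'' you flag at the end.

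For the lower bound, the paper argues by contradiction at an interior minimum $(i,z)$ of $V^*_i-\kappa(\lambda)V^\lambda_i+\iota$. If $z$ is in the continuation region, it uses the $C^2$ regularity of $V^*_i$ there and the equation to get a sign contradiction. Otherwise, Lemma \ref{lem_near_ineq} and the choice of $\kappa(\lambda)$ transfer the minimum to an index $j$ with $V^*_i(z)=V^*_j(z)+G_{ij}$. Iterating eventually yields a cycle of switching costs summing to zero, contradicting $G_{ij}>0$.

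Your verification route uses the same two ingredients: Lemma \ref{lem_near_ineq} for the obstacle inequality and the HJB identity for the differential inequality. You then feed the classical subsolution $W$ into It\^o's formula against an arbitrary $\xi\in\mathcal A$. This needs no regularity of $V^*$ and no cycle argument, only a routine localization near $\partial\mathcal O$, where $W$ is merely continuous.

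As a bonus, your computation shows that, since $L_i\ge 0$, the term $(1-\kappa(\lambda))C_{coef}$ in the shift is superfluous: $c=\kappa(\lambda)(m-1)\lambda/r$ already works. Both proofs tacitly require $\lambda$ small enough that $\lambda\log\frac{C_{up}}{\lambda}>0$, so that $\kappa(\lambda)\le 1$, as you observed.
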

 The most left hand side of above inequality can be rewritten as $$V^\lambda_i-(1-\kappa(\lambda))V^\lambda_i-\frac{(1-\kappa(\lambda))C_{coef}+(m-1)\lambda}{r}.$$ From the definition of $\kappa(\lambda)$, one sees that $$(1-\kappa(\lambda))=\sup_{ij} \frac{\lambda\log\frac {C_{up}}{\lambda }}{G_{ij}+\lambda\log\frac {C_{up}}{\lambda }}=O(\lambda\log \frac1\lambda).$$ Combining with the bound provided in Theorem \ref{thm_hjb_system}, Theorem \ref{thm_diff_V}  equivalently states that, for sufficiently small $\lambda$ and some constant $C$, 
 \begin{equation}\label{ineq_error}
 	 V^\lambda_i(x)-C\lambda \log \frac1\lambda\le V^*_i \le V^\lambda_i.
 \end{equation}

 Similar error bounds have been given in various optimal control problems, see \cite{dong2024randomized} for optimal stopping problem and \cite{sethi2025entropy} for drift control problem. We therefore believe that the bias order  $O(\lambda \log \frac1\lambda)$ is ubiquitous in a certain sense when an entropy or KL-divergence term is employed as a regularization. Strikingly, the identical rate arises independently in the theory of entropic optimal transport, a seemingly unrelated field. For example, Eckstein and Nutz \cite{eckstein2024convergence} established sharp convergence rates of this precise order for divergence-regularized optimal transport as the regularization parameter vanishes. 
 
 The similarity is not merely quantitative. Both problems can be viewed as optimization over probability laws with a relative-entropy penalty. In entropic optimal transport, one minimizes
 \[
 \int c(x,y)\, d\gamma(x,y)
 +\lambda\,\mathrm{KL}\!\left(\gamma\,\middle|\,\mu\otimes\nu\right)
 \]
 over admissible couplings $\gamma$ satisfying prescribed marginal constraints. In our setting, the control policy $\pi$ induces a path measure $\mathbb P^\pi$, and Section~2 shows that the regularized switching problem is equivalent to minimizing
 \[
 \mathbb E^\pi[J]
 +\lambda\,\mathrm{KL}\!\left(\mathbb P^\pi\,\middle|\,\mathbb P^0\right)
 \]
 over all path measures generated by admissible switching intensities. Thus, both frameworks consist of a linear cost functional acting on a probability measure together with a KL-divergence relative to a reference measure. The essential difference lies in the admissible set: entropic optimal transport constrains the marginals of a static coupling, whereas our problem constrains the measure to arise from a controlled drift-switching dynamics.
 
 This common variational structure also explains the appearance of Gibbs-type optimizers in both theories. In entropic optimal transport, the optimal coupling admits the Gibbs form
 \[
 d\gamma^\varepsilon
 =
 \exp\!\left(
 -\frac{c+\varphi+\psi}{\varepsilon}
 \right)
 \,d(\mu\otimes\nu),
 \]
 while the optimal switching intensity is given by
 \[
 \bar{\pi}_{ij}(x)
 =
 \exp\!\left(
 \frac{V_i^\lambda(x)-V_j^\lambda(x)-G_{ij}}{\lambda}
 \right).
 \]
 In both cases, the KL penalty converts a hard optimization problem into a smooth convex dual problem whose optimizer has an exponential (Boltzmann--Gibbs) structure. From this perspective, entropy-regularized stochastic control and entropic optimal transport may be viewed as dynamic and static manifestations of the same KL-penalized variational principle.
 
 \section{Mirror Descent Flow}
 Having established the approximation properties of the entropy-regularized switching problem in the previous section, we now turn to the algorithmic question of how to compute the associated optimal policy. Our goal is to develop a continuous-time policy optimization dynamics that exploits the KL-regularized structure of the problem and enjoys provable convergence guarantees. To this end, we first derive a performance difference identity that characterizes the variation of the value function under policy perturbations. This identity naturally suggests a gradient-type evolution in the space of switching intensities. Motivated by \cite{sethi2025entropy}, rather than working directly in the primal policy space, we introduce a logarithmic parameterization of the switching intensities and formulate a mirror descent flow in the corresponding dual space. This representation is particularly well suited to entropy regularization, preserves the positivity of switching rates, and leads to a monotone improvement property for the value function along the flow. In this section, we will establish the well-posedness of the mirror descent dynamics and derive quantitative convergence estimates under both constant and annealing temperature schedules.
 
  For any $\boldsymbol{\pi} \in \mathcal A^r$, one can compute that the corresponding value function $V^{\boldsymbol{\pi},\lambda}_i:=\mathbb E^{\boldsymbol{\pi}}_{i,x}[J^\lambda(\boldsymbol{\pi})]$ is the viscosity solution of  the following elliptic system
 \begin{equation}\label{eq_value_function}
 	\mathcal L_i V^{\boldsymbol{\pi},\lambda}_i+L_i(x)-r V^{\boldsymbol{\pi},\lambda}_i+\sum_{ j \neq i}(V^{\boldsymbol{\pi},\lambda}_j+G_{ij}-V^{\boldsymbol{\pi},\lambda}_i)\pi^{ij}+\lambda D(\pi^{ij}|1)=0,  
 \end{equation}
 with the boundary condition $V^{\boldsymbol{\pi},\lambda}_i(x)=0$ on $\partial \mathcal O$. Moreover, standard regularity results of linear PDEs yield that $V^{\boldsymbol{\pi},\lambda}_i\in W^{2,p^*}(\mathcal O)$ and $tr(\sigma\sigma^TD^2V^{\boldsymbol{\pi},\lambda}_i) \in L^\infty(\mathcal O)$.    
 
 Given any two policy $\boldsymbol{\pi}$ and $\tilde{\boldsymbol{\pi}}$, denote by $(X,I)$ the corresponding process under the policy $\boldsymbol{\pi}$. We first prove the following so-called performance difference lemma
 \begin{lemma}[Performance Difference]\label{lem_performance_diff}
 	\begin{small}
\begin{equation*}
	\begin{split}
&V^{\boldsymbol{\pi},\lambda}_i(x)-V^{\tilde{\boldsymbol{\pi}},\lambda}_i(x)=\lambda \mathbb E_{i,x}^{\boldsymbol{\pi}}\left[ \int_0^{\tau_{\mathcal O}}\sum_{j \neq I_t}(D(\pi^{I_t j}(X_t)|\tilde {\pi}^{I_t j}(X_t)) )e^{-rt}dt\right]\\
&+\mathbb E^{ {\boldsymbol{\pi}}}_{i,x}\left[\int_0^{\tau_{\mathcal O}}  \sum_{j \neq  I_{t}}(V^{\tilde{\boldsymbol{\pi}},\lambda}_j+G_{I_tj}-V^{\tilde{\boldsymbol{\pi}},\lambda}_i+\lambda \log\tilde \pi^{I_tj})(X_t)(\pi^{I_tj}(X_t)-\tilde{\pi}^{I_tj}(X_t))e^{-rt}dt  \right].
\end{split}
\end{equation*}
\end{small}
\end{lemma}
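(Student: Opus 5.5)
The plan is to apply Itô's formula (Dynkin's formula for the regime-switching diffusion) to the discounted process $e^{-rt}V^{\tilde{\boldsymbol{\pi}},\lambda}_{I_t}(X_t)$ under the law generated by $\boldsymbol{\pi}$, on $[0,\tau_{\mathcal O}]$. By \eqref{eq_value_function} we have $V^{\tilde{\boldsymbol{\pi}},\lambda}_i\in W^{2,p^*}(\mathcal O)$, so a generalized Itô (Krylov) formula is needed. Under $P^{\boldsymbol{\pi}}$, the pair $(X,I)$ has generator
$$\mathcal L_i\varphi_i+\sum_{j\neq i}\pi^{ij}(\varphi_j-\varphi_i).$$
The jump parts of the process are compensated by the martingales $M^{ij}$, and the switching costs $G_{I_{t-}j}$ are paid along $dN^{ij}$, whose compensator is $\pi^{ij}\,dt$. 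Since $V^{\tilde{\boldsymbol{\pi}},\lambda}$ vanishes on $\partial\mathcal O$, this yields
$$
V^{\tilde{\boldsymbol{\pi}},\lambda}_i(x)=-\mathbb E^{\boldsymbol{\pi}}_{i,x}\Big[\int_0^{\tau_{\mathcal O}}e^{-rt}\Big(\mathcal L_{I_t}V^{\tilde{\boldsymbol{\pi}},\lambda}_{I_t}-rV^{\tilde{\boldsymbol{\pi}},\lambda}_{I_t}+\sum_{j\neq I_t}\pi^{I_tj}\big(V^{\tilde{\boldsymbol{\pi}},\lambda}_j-V^{\tilde{\boldsymbol{\pi}},\lambda}_{I_t}\big)\Big)(X_t)\,dt\Big].
$$
The integrability needed to drop the martingale terms and pass to the limit should follow from the boundedness of $\pi,\tilde\pi$, the bound $\mathbb E[\tau_{\mathcal O}]<\infty$ (by nondegeneracy on a bounded domain), and $r>0$.

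Next, I will substitute the PDE \eqref{eq_value_function} for $\tilde{\boldsymbol{\pi}}$:
$$\mathcal L_iV^{\tilde{\boldsymbol{\pi}},\lambda}_i-rV^{\tilde{\boldsymbol{\pi}},\lambda}_i=-L_i-\sum_{j\ne i}\big[(V^{\tilde{\boldsymbol{\pi}},\lambda}_j+G_{ij}-V^{\tilde{\boldsymbol{\pi}},\lambda}_i)\tilde\pi^{ij}+\lambda D(\tilde\pi^{ij}|1)\big].$$
This holds a.e., which is enough because $X$ has a density by nondegeneracy (Krylov's estimate). I will also write
$$V^{\boldsymbol{\pi},\lambda}_i(x)=\mathbb E^{\boldsymbol{\pi}}_{i,x}\Big[\int_0^{\tau_{\mathcal O}}e^{-rt}\Big(L_{I_t}+\sum_{j\ne I_t}\big(G_{I_tj}\pi^{I_tj}+\lambda D(\pi^{I_tj}|1)\big)\Big)dt\Big],$$
again using the compensator to rewrite the switching-cost sum. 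Subtracting, the $L$ terms cancel, and the difference $V^{\boldsymbol{\pi},\lambda}_i-V^{\tilde{\boldsymbol{\pi}},\lambda}_i$ equals the expectation of
$$\sum_{j\neq I_t}\Big[(V^{\tilde{\boldsymbol{\pi}},\lambda}_j+G_{I_tj}-V^{\tilde{\boldsymbol{\pi}},\lambda}_{I_t})(\pi^{I_tj}-\tilde\pi^{I_tj})+\lambda\big(D(\pi^{I_tj}|1)-D(\tilde\pi^{I_tj}|1)\big)\Big]e^{-rt}.$$

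Finally, a pointwise algebraic identity converts the entropy difference:
$$D(a|1)-D(b|1)=a\log a-a-b\log b+b=D(a|b)+(a-b)\log b,$$
where $D(a|b)=a\log\frac ab+b-a$. Inserting this gives exactly the two terms of the statement, with $\lambda\log\tilde\pi^{I_tj}$ appearing inside the second bracket.

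The main obstacle is justifying Itô's formula for $W^{2,p^*}$ functions and the martingale and limit arguments up to $\tau_{\mathcal O}$. I would handle this by localization at $\tau_{\mathcal O}\wedge n$ and Krylov's inequality, then let $n\to\infty$ using dominated convergence, relying on the boundedness of $V$ and the bounds on $\pi,\tilde\pi$. A related subtlety is that $\log\tilde\pi$ must be finite wherever $\pi>0$; I would assume $\tilde\pi$ is bounded away from zero, or read $0\log 0=0$ so that the terms are interpreted consistently.
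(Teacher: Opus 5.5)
Your proposal is correct and follows essentially the paper's route: an Itô/Dynkin representation under $P^{\boldsymbol{\pi}}$, substitution of the linear equation \eqref{eq_value_function} for $\tilde{\boldsymbol{\pi}}$, and the identity $D(a|1)-D(b|1)=D(a|b)+(a-b)\log b$. The only cosmetic difference is that you use the probabilistic definition of $V^{\boldsymbol{\pi},\lambda}$ directly, whereas the paper also applies Itô to $V^{\boldsymbol{\pi},\lambda}$ and subtracts the two PDEs; your version also correctly keeps the $-r\varphi$ term omitted from the paper's display \eqref{eq_ito_expec}, and correctly reads $V^{\tilde{\boldsymbol{\pi}},\lambda}_i$ in the statement as $V^{\tilde{\boldsymbol{\pi}},\lambda}_{I_t}$.
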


Motivated by the this equation, one may define a flow in $\{\boldsymbol{\pi}_s\}_{s\ge0} \subset \mathcal A^r$ as
\begin{equation*}
\partial_s \pi^{ij}_s(x)=-\left( V^{\boldsymbol{\pi},\lambda}_j(x)+G_{ij}-V^{\boldsymbol{\pi},\lambda}_i(x) +\lambda \log \pi^{ij}_s(x) \right)\pi^{ij}_s(x).
\end{equation*} 
This flow is known as the Fisher--Rao gradient flow, studied systematically by Kerimkulov et al \cite{kerimkulov2025fisher} in the context of entropy-regularized Markov 
decision processes on Polish spaces. The Fisher--Rao flow is a gradient flow on the space of policies equipped with the the natural Riemannian metric on the statistical manifold of probability distribution, which is closely related to the natural policy gradient method of Kakade \cite{kakade2001natural}. However, it turns out that working directly with the Fisher--Rao flow in the primal policy space presents several analytical difficulties in our setting. This motivates passing to the dual logarithmic policy space, which is  called the mirror descent flow. The terminology reflects the classical mirror descent algorithm of Nemirovski and Yudin \cite{nemirovskij1983problem}, in which the Bregman divergence associated to a strictly convex mirror map replaces the Euclidean distance in the gradient descent update. In our case, it means that we shall consider  the logarithm of $\pi^{ij}$.  Thus, we focus on a refined subset $\mathcal A^r_G$ of $\mathcal A^r$, which is defined as 
\begin{equation*}
\mathcal A^r_G=\{\boldsymbol{\pi}=(\pi^{ij}) | \pi^{ij}(x)=\exp(Z^{ij}(x) )\text{ for some $Z^{ij} \in C_b(\mathcal O)$, and $i \neq j$} \}.
\end{equation*}
We call the elements in $\mathcal A^r_G$ Gibbs policies. Given $Z=(Z^{ij})_{i\neq j} \in (C_b(\mathcal O))^{m\times (m-1)} $, it is obvious that one can define an element in $\mathcal A^r_G$ by exponential transformation, and we denote it as $\boldsymbol{\pi}(Z)$.

Now, consider a temperature schedule $\boldsymbol{\lambda} \in C^1([0,\infty),(0,\infty))$ with $\boldsymbol{\lambda}_s$ being non-increasing.  the related mirror-descent  $(Z_s)_{s \ge 0}$ in $\mathcal A^r_G$ defined as 
\begin{equation}\label{mirror_descent_flow}
\partial_s Z^{ij}_s=-\left(  V^{\boldsymbol{\pi}(Z),\boldsymbol{\lambda}_s}_j+G_{ij}- V^{\boldsymbol{\pi}(Z),\boldsymbol{\lambda}_s}_i+ \boldsymbol{\lambda}_s Z^{ij}_s \right),
\end{equation}
For its wellposedness, we have the following lemma.
\begin{lemma}\label{lem_policy_improment}
Given $Z_0 \in \mathcal A^r_G$, the mirror descent flow \eqref{mirror_descent_flow} is well-defined. Moreover, it holds that $\partial_s V^{\boldsymbol{\pi}(Z_s),\boldsymbol{\lambda}_s}_i(x) \le 0$  for all $i \in \mathbb I_m $ and $x \in \mathcal O$. 
\end{lemma}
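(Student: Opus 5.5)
Well-posedness will come from treating \eqref{mirror_descent_flow} as an ODE in the Banach space $E:=(C_b(\mathcal O))^{m\times(m-1)}$ and applying Picard--Lindelöf, with an a priori bound to make the solution global. Monotonicity will come from differentiating the value function along the flow through the performance difference identity of Lemma \ref{lem_performance_diff}.

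\textbf{Local solution.} Write the right-hand side as $F(s,Z)^{ij}=-(V^{\boldsymbol{\pi}(Z),\boldsymbol{\lambda}_s}_j+G_{ij}-V^{\boldsymbol{\pi}(Z),\boldsymbol{\lambda}_s}_i+\boldsymbol{\lambda}_sZ^{ij})$. I will show that $Z\mapsto V^{\boldsymbol{\pi}(Z),\lambda}$ is locally Lipschitz from $E$ to $(C(\bar{\mathcal O}))^m$, uniformly for $\lambda$ in compacts.
\begin{itemize}
\item Take $Z,\tilde Z$ in a ball of radius $R$ in $E$. Then $\pi^{ij}=e^{Z^{ij}}$ and $\tilde\pi^{ij}$ are bounded by $e^R$.
\item Moreover $|\pi^{ij}-\tilde\pi^{ij}|\le e^R\|Z-\tilde Z\|$ and $|D(\pi^{ij}|1)-D(\tilde\pi^{ij}|1)|\le Re^R\|Z-\tilde Z\|$.
\item The difference $W=V^{\boldsymbol{\pi}}-V^{\tilde{\boldsymbol{\pi}}}$ solves the weakly coupled linear system \eqref{eq_value_function} with coefficients $\pi$. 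Its source term is bounded by $C(R)\|Z-\tilde Z\|$ times $(1+\|V^{\tilde{\boldsymbol{\pi}}}\|_\infty)$.
\item The coupling is cooperative: off-diagonal coefficients $\pi^{ij}\ge0$, diagonal $-r-\sum_j\pi^{ij}$. The maximum principle for such systems, or equivalently the Feynman--Kac representation with discount $r$, then gives $\|W\|_\infty\le C(R)\|Z-\tilde Z\|/r$.
\item The same representation gives $\|V^{\boldsymbol{\pi},\lambda}\|_\infty\le (C_{coef}+\lambda\,C(R))/r$ and continuity in $\lambda$. Continuity of $\boldsymbol{\lambda}$ then makes $F$ continuous in $s$ and locally Lipschitz in $Z$, so Picard--Lindelöf gives a unique local solution in $E$.
\end{itemize}

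\textbf{Global existence.} I expect this to be the main obstacle: I need an a priori bound on $\|Z_s\|_\infty$ that does not blow up in finite time, even though $V^{\boldsymbol{\pi}(Z)}$ depends on $Z$ through $e^{Z}$. The equation is linear in $Z$ apart from the value-function term, so Duhamel's formula gives
$$
Z^{ij}_s=e^{-\Lambda_s}Z^{ij}_0-\int_0^s e^{-(\Lambda_s-\Lambda_u)}\bigl(V_j+G_{ij}-V_i\bigr)_u\,du,\qquad \Lambda_s=\int_0^s\boldsymbol{\lambda}_u\,du .
$$
The value functions are nonnegative because $L\ge0$, $G>0$ and $D\ge0$. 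I will first use the monotonicity proved below, obtained on the local interval, to bound $V_s\le V_0$, which is finite. The Duhamel formula then gives $\|Z_s\|\le\|Z_0\|+s(2\|V_0\|_\infty+\max G)$. This bound grows at most linearly in $s$, so the solution extends to all $s\ge0$ by continuation. If the monotonicity cannot be used a priori, I fall back on the crude bound: the positive part of $Z^{ij}$ grows at most linearly once $V_j\ge0$ is used, and $V_i$ is bounded in terms of $e^{\max Z}$. Since this bound would feed $e^{Z}$ back into a Gronwall-type estimate and could blow up in finite time, I regard the $V_s\le V_0$ route as the real argument.

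\textbf{Monotonicity.} Apply Lemma \ref{lem_performance_diff} with $\boldsymbol{\pi}=\boldsymbol{\pi}(Z_{s+h})$, $\tilde{\boldsymbol{\pi}}=\boldsymbol{\pi}(Z_s)$ and the same $\lambda=\boldsymbol{\lambda}_s$, then divide by $h$ and let $h\to0$.
\begin{itemize}
\item The KL term is $O(h^2)$ and vanishes in the limit.
\item In the second term, $\lambda\log\tilde\pi^{ij}=\lambda Z^{ij}_s$, so the bracket equals exactly $-\partial_sZ^{ij}_s$.
\item Also $\pi-\tilde\pi\approx h\,\tilde\pi^{ij}\partial_sZ^{ij}_s$.
\item The law under $\boldsymbol{\pi}(Z_{s+h})$ converges to that under $\boldsymbol{\pi}(Z_s)$, since the Girsanov densities converge.
\end{itemize}
This yields the policy-variation part
$$
-\mathbb E^{\boldsymbol{\pi}(Z_s)}_{i,x}\Bigl[\int_0^{\tau_{\mathcal O}}e^{-rt}\sum_{j\ne I_t}\pi^{I_tj}_s(\partial_sZ^{I_tj}_s)^2(X_t)\,dt\Bigr]\le0 .
$$
The temperature-variation part is $\dot{\boldsymbol{\lambda}}_s\,\mathbb E[\int e^{-rt}\sum D(\pi|1)\,dt]\le0$, because $\boldsymbol{\lambda}$ is non-increasing and $D\ge0$. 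Adding the two parts gives $\partial_sV^{\boldsymbol{\pi}(Z_s),\boldsymbol{\lambda}_s}_i(x)\le0$. Justifying the limit interchange needs only the uniform bounds from the local step.
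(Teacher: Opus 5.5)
Your proposal is correct and follows essentially the same route as the paper. It shows Lipschitz dependence of $V^{\boldsymbol{\pi}(Z),\lambda}$ on $Z$ over bounded sets via the comparison principle, obtains a local fixed-point solution, and gets monotonicity along the flow from Lemma~\ref{lem_performance_diff} by letting $h\to0$. It then deduces an a priori bound on $Z_s$ from $0\le V^{\boldsymbol{\pi}(Z_s),\boldsymbol{\lambda}_s}_i\le V^{\boldsymbol{\pi}(Z_0),\boldsymbol{\lambda}_0}_i$, which lets the solution be continued to all $s\ge0$.

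The differences are only technical. You avoid the paper's truncation $\chi_m$, and you swap the roles of $\boldsymbol{\pi}(Z_s)$ and $\boldsymbol{\pi}(Z_{s+h})$ in the performance-difference identity, so the bracket is exactly $-\partial_sZ_s$ at the cost of needing continuity of the law in the policy. You also bound $Z_s$ by Duhamel's formula with linear growth in $s$, rather than by the paper's Gronwall bound $M+M/\boldsymbol{\lambda}_s$.
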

The previous lemma states that the value function is decreasing along the flow. Thus, it is therefor not surprising that it should convergence to the optimal value function.. In fact, we can quantify the error bound by the following theorem.  
\begin{theorem}\label{thm_error_bound}
	For any $\lambda >0$ and non-increasing temperature schedule $\boldsymbol{\lambda} \in C^1([0,\infty),(0,\infty))$, let $\{Z_s\}_{s\ge0}$ be the corresponding mirror descent flow and $\bar{\boldsymbol{\pi}}$ be the optimal policy defined by \eqref{optim_cond} with temperature $\lambda$. Then, it holds that 
\begin{equation}\label{one_side_error_bound}
	\begin{split}
		&V^{\boldsymbol{\pi}(Z_s),\boldsymbol{\lambda}_s}_i(x)-V^{\bar{\boldsymbol{\pi}},\lambda}_i(x)\\
		\le& \frac{1}{\int_0^s e^{\int_0^u \boldsymbol{\lambda}_v dv}du} \mathbb E^{\bar{\boldsymbol{\pi}}}_{i,x}\left[\int_0^{\tau_{\mathcal O}}\sum_{j \neq  I_t}( D(\bar \pi^{ I_tj}( X_t)|\exp(Z^{ij}_0
		(\bar X_t))))e^{-rt}dt\right]\\
		&\qquad+\frac{\int_0^s  e^{\int_0^u \boldsymbol{\lambda}_v dv}(\boldsymbol{\lambda}_u-\lambda)du}{\int_0^s e^{\int_0^u \boldsymbol{\lambda}_v dv}du} \mathbb E^{\bar{\boldsymbol{\pi}}}_{i,x}\left[\int_0^{\tau_{\mathcal O}} (\sum_{j \neq I_t}D(\bar \pi^{I_tj}(X_t)|1))e^{-rt}dt\right].
	\end{split}
\end{equation}
\end{theorem}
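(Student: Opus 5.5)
The plan is to adapt the Lyapunov-type argument of \cite{sethi2025entropy} for policy mirror descent. The Lyapunov functional is the weighted Bregman divergence between $\bar{\boldsymbol{\pi}}$ and the current iterate, integrated along the optimal process:
\[
\Phi_s := \mathbb E^{\bar{\boldsymbol{\pi}}}_{i,x}\left[\int_0^{\tau_{\mathcal O}}\sum_{j\neq I_t} D\big(\bar\pi^{I_tj}(X_t)\,|\,\exp(Z^{I_tj}_s(X_t))\big)e^{-rt}dt\right].
\]
For $p=\bar\pi$ fixed and $q=e^{Z}$, we have $\partial_s D(p|e^{Z}) = (e^{Z}-p)\,\partial_s Z$. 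Substituting the flow \eqref{mirror_descent_flow} gives
\[
\partial_s D = (\bar\pi^{ij}-\pi^{ij}_s)\,A^{ij}_s,\qquad A^{ij}_s:= V^{\boldsymbol{\pi}(Z_s),\boldsymbol{\lambda}_s}_j+G_{ij}-V^{\boldsymbol{\pi}(Z_s),\boldsymbol{\lambda}_s}_i+\boldsymbol{\lambda}_s Z^{ij}_s .
\]
Differentiation under the expectation is justified by the well-posedness in Lemma \ref{lem_policy_improment}, which gives boundedness of $Z_s$ on compact $s$-intervals.

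Next we invoke the performance difference Lemma \ref{lem_performance_diff} with $\boldsymbol{\pi}=\bar{\boldsymbol{\pi}}$ and $\tilde{\boldsymbol{\pi}}=\boldsymbol{\pi}(Z_s)$ at temperature $\boldsymbol{\lambda}_s$. The advantage term in that lemma is exactly $A^{ij}_s$, so
\[
\partial_s\Phi_s = V^{\bar{\boldsymbol{\pi}},\boldsymbol{\lambda}_s}_i-V^{\boldsymbol{\pi}(Z_s),\boldsymbol{\lambda}_s}_i-\boldsymbol{\lambda}_s\Phi_s .
\]
The cost functional is affine in the temperature, so
\[
V^{\bar{\boldsymbol{\pi}},\boldsymbol{\lambda}_s}_i = V^{\bar{\boldsymbol{\pi}},\lambda}_i+(\boldsymbol{\lambda}_s-\lambda)K,\qquad K:=\mathbb E^{\bar{\boldsymbol{\pi}}}_{i,x}\Big[\int_0^{\tau_{\mathcal O}}\sum_{j \neq I_t}D(\bar\pi^{I_tj}(X_t)|1)e^{-rt}dt\Big].
\]
Writing $\delta_s:=V^{\boldsymbol{\pi}(Z_s),\boldsymbol{\lambda}_s}_i-V^{\bar{\boldsymbol{\pi}},\lambda}_i$, this becomes the differential identity
\[
\partial_s\Phi_s+\boldsymbol{\lambda}_s\Phi_s = -\delta_s+(\boldsymbol{\lambda}_s-\lambda)K .
\]

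We then multiply by the integrating factor $e^{\int_0^s\boldsymbol{\lambda}_v dv}$ and integrate over $[0,s]$. Since $\Phi_s\ge0$, this yields
\[
\int_0^s e^{\int_0^u\boldsymbol{\lambda}_v dv}\,\delta_u\,du \le \Phi_0+K\int_0^s e^{\int_0^u\boldsymbol{\lambda}_v dv}(\boldsymbol{\lambda}_u-\lambda)\,du .
\]
By Lemma \ref{lem_policy_improment}, $\delta_u$ is non-increasing in $u$, so $\delta_u\ge\delta_s$ for $u\le s$. Hence the left side is at least $\delta_s\int_0^s e^{\int_0^u\boldsymbol{\lambda}_v dv}du$. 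Dividing by this integral gives exactly \eqref{one_side_error_bound}, with $\Phi_0$ the first expectation in the statement.

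The main obstacle is the first step: justifying the interchange of $\partial_s$ with the expectation, and the use of the performance difference lemma, when $\tilde{\boldsymbol{\pi}}$ varies with $s$. This needs uniform-in-$x$ bounds on $Z_s$ and on $V^{\boldsymbol{\pi}(Z_s),\boldsymbol{\lambda}_s}$ over compact time intervals. I would obtain these from the well-posedness part of Lemma \ref{lem_policy_improment}: an a priori sup bound via the ODE comparison $|\partial_s Z|\le C+\boldsymbol{\lambda}_s|Z|$, together with the uniform $L^\infty$ bound on the value functions. With these bounds, dominated convergence applies. To avoid differentiating at all, one could instead integrate the pointwise identity in $s$ first and then apply Fubini.
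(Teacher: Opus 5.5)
Your proposal is correct and follows the paper's proof essentially step for step. Both use the same Lyapunov functional $\mathcal D_{i,x}(\bar{\boldsymbol{\pi}}|\boldsymbol{\pi}(Z_s))$ and the same linear ODE obtained from Lemma \ref{lem_performance_diff}, which is solved with the integrating factor $e^{\int_0^s \boldsymbol{\lambda}_v dv}$; both then use the monotonicity from Lemma \ref{lem_policy_improment} to bound $\delta_u$ below by $\delta_s$, and drop the nonnegative terminal term. Your remarks on justifying the interchange of $\partial_s$ and expectation (or integrating first and applying Fubini) add rigor the paper omits, but they do not change the argument.
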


From  Lemma \ref{lem_near_ineq} and \eqref{optim_cond}, we get that $\bar \pi^{ij}(x) \le \frac{C_{up}}{\lambda}$. Since $D(x|1)$ is convex with respect to $x$, it holds that 
$
\sup_{0<x<\frac{C_{up}}{\lambda}} D(x|1)=1 \vee D(\frac{C_{up}}{\lambda}|1) \le C(1+\frac1\lambda\log\frac1\lambda).  
$ Hence, 
$$
\mathbb E^{\bar{\boldsymbol{\pi}}}_{i,x}\left[\int_0^{\tau_{\mathcal O}} (\sum_{j \neq I_t}D(\bar \pi^{I_tj}|1))e^{-rt}dt\right] \le \frac{C}{r}(1+\frac1\lambda\log\frac1\lambda).
$$
One can have a similar estimation for the first expectation in \eqref{one_side_error_bound}. Thus, we get the following corollary.
\begin{corollary}\label{cor_error}
	Under the assumption of Theorem \ref{thm_error_bound}, it holds that 
	$$
	V^{\boldsymbol{\pi}(Z_s),\boldsymbol{\lambda}_s}_i(x)-V^{\bar{\boldsymbol{\pi}},\lambda}_i(x) \le C(1+\frac1\lambda \log \frac1\lambda)\left(\frac{1}{\int_0^s e^{\int_0^u \boldsymbol{\lambda}_v dv}du}+\frac{\int_0^s  e^{\int_0^u \boldsymbol{\lambda}_v dv}(\boldsymbol{\lambda}_u-\lambda)du}{\int_0^s e^{\int_0^u \boldsymbol{\lambda}_v. dv}du} \right).
	$$
\end{corollary}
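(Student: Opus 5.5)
The corollary follows from Theorem \ref{thm_error_bound} once both expectations on its right-hand side are bounded by $C(1+\frac1\lambda\log\frac1\lambda)$, with $C$ independent of $s$ and of the schedule. The argument for the second expectation is already indicated in the text. By Lemma \ref{lem_near_ineq} and \eqref{optim_cond}, $0<\bar\pi^{ij}\le C_{up}/\lambda$. Since $x\mapsto D(x|1)$ is convex with $D(0|1)=1$, its supremum over $(0,C_{up}/\lambda]$ equals $1\vee D(C_{up}/\lambda|1)$. This is at most $C(1+\frac1\lambda\log\frac1\lambda)$, using $\frac{C_{up}}{\lambda}\log\frac{C_{up}}{\lambda}\le C\frac1\lambda\log\frac1\lambda$ for small $\lambda$ and bounding the large-$\lambda$ regime by a constant. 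Summing over the $m-1$ indices $j\neq I_t$ and integrating against $e^{-rt}$ up to $\tau_{\mathcal O}$ gives the bound $\frac{C}{r}(1+\frac1\lambda\log\frac1\lambda)$.

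For the first expectation, I will expand
\[
D(\bar\pi|e^{Z_0}) = \bar\pi\log\bar\pi - \bar\pi Z_0 + e^{Z_0} - \bar\pi = D(\bar\pi|1) - \bar\pi Z_0^{ij} + e^{Z_0^{ij}} - 1 .
\]
Because $Z_0\in (C_b(\mathcal O))^{m\times(m-1)}$, both $\|Z_0\|_\infty$ and $e^{\|Z_0\|_\infty}$ are finite constants fixed by the initialization. Then $|\bar\pi Z_0^{ij}|\le \|Z_0\|_\infty C_{up}/\lambda$ and $e^{Z_0^{ij}}\le e^{\|Z_0\|_\infty}$. Together with the previous step,
\[
D(\bar\pi^{ij}|e^{Z_0^{ij}})\le C\Bigl(1+\frac1\lambda\log\frac1\lambda\Bigr),
\]
where $C$ now depends on $Z_0$, and for small $\lambda$ the term $1/\lambda$ is absorbed into $\frac1\lambda\log\frac1\lambda$. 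Integrating as before bounds the first expectation by $\frac{C}{r}(1+\frac1\lambda\log\frac1\lambda)$.

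Substituting both bounds into \eqref{one_side_error_bound} and factoring out the common constant gives the stated inequality. The one point needing care is the sign of the second coefficient $\int_0^s e^{\int_0^u\boldsymbol\lambda_v dv}(\boldsymbol\lambda_u-\lambda)du$. When it is nonnegative, replacing the expectation by its upper bound is valid. When it is negative, the bound still holds: the exact term is then nonpositive, while the corollary's term $C(1+\frac1\lambda\log\frac1\lambda)\times(\text{coefficient})$ is more negative, so I handle this case by simply dropping the nonpositive term on the right side of \eqref{one_side_error_bound} after noting that the expectation is nonnegative. I expect the main obstacle to be this uniform control of the entropy terms, namely making the constant independent of $s$ and of the schedule. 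That uniformity rests entirely on the $\lambda$-uniform bound $\bar\pi\le C_{up}/\lambda$ from Lemma \ref{lem_near_ineq}.
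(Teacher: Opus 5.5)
Your bounds on the two expectations follow the paper's argument. The identity $D(\bar\pi^{ij}|e^{Z^{ij}_0})=D(\bar\pi^{ij}|1)-\bar\pi^{ij}Z^{ij}_0+e^{Z^{ij}_0}-1$ is a correct way to carry out the ``similar estimation'' the paper only asserts, with $C$ now depending on $\|Z_0\|_\infty$. The gap is in your last paragraph, on the sign of the coefficient.

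Set $\Theta_s:=\int_0^s e^{\int_0^u\boldsymbol{\lambda}_v dv}du$ and $c_s:=\Theta_s^{-1}\int_0^s e^{\int_0^u\boldsymbol{\lambda}_v dv}(\boldsymbol{\lambda}_u-\lambda)du$. Let $K=C(1+\frac1\lambda\log\frac1\lambda)$, and let $E_2\in[0,K]$ be the second expectation. If $c_s<0$, then $c_sE_2\ge c_sK$, so $E_2$ cannot be replaced by its upper bound. Dropping the nonpositive term $c_sE_2$ only gives $V^{\boldsymbol{\pi}(Z_s),\boldsymbol{\lambda}_s}_i-V^{\bar{\boldsymbol{\pi}},\lambda}_i\le K/\Theta_s$. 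That is strictly larger than the claimed right-hand side $K(\Theta_s^{-1}+c_s)$. ``The corollary's term is more negative'' means the corollary is the \emph{stronger} statement in that case, so it is not implied. Deriving it would require a lower bound $E_2\ge K$, which you do not have. Such a bound would also conflict with choosing $K$ large enough to dominate the first expectation.

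Theorem \ref{thm_error_bound} allows any $\lambda>0$, so this case really occurs. For example, $\boldsymbol{\lambda}\equiv\lambda_0<\lambda$ gives $c_s=\lambda_0-\lambda<0$. The paper's one-line derivation ignores this case as well. The right move is to restrict the statement rather than claim the bound holds there. Either assume $\lambda\le\boldsymbol{\lambda}_s$, which gives $\boldsymbol{\lambda}_u\ge\lambda$ for all $u\le s$ by monotonicity and hence $c_s\ge 0$, or replace $c_s$ by $c_s^+$. Both uses in the paper fall in this regime: the constant schedule $\boldsymbol{\lambda}\equiv\lambda$ gives $c_s=0$, and Corollary \ref{cor_annealing} takes $\lambda=\boldsymbol{\lambda}_s$. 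With that restriction, your argument is complete.
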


Consider the constant schedule $\boldsymbol{\lambda}_s \equiv \lambda$. Recalling the error estimation \eqref{ineq_error} between regularized optimal value function and the original optimal value, one can get that 
\begin{equation*}
V^{\boldsymbol{\pi}(Z_s),{\lambda}}_i(x)-V^*_i(x) \le C\left(\frac{\lambda+\log\frac1\lambda}{e^{\lambda s}-1}+\lambda \log\frac1\lambda\right).	
\end{equation*}
Next, let us consider an annealing schedule $\boldsymbol{\lambda}_s=\frac{1}{\sqrt{1+s}}$. Then, we have the following convergence rate.
\begin{corollary}\label{cor_annealing} There exits a constant $C$ such that
$$
0 \le 	V^{\boldsymbol{\pi}(Z_s),\boldsymbol{\lambda}_s}_i(x)-V^{*}_i(x) \le C \frac{\log s}{\sqrt{s}}.
$$
\end{corollary}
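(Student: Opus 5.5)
The plan is to combine Corollary \ref{cor_error} with the bias estimate \eqref{ineq_error}, choosing the reference temperature $\lambda$ depending on $s$. The lower bound $0\le V^{\boldsymbol{\pi}(Z_s),\boldsymbol{\lambda}_s}_i-V^*_i$ is immediate. The KL term is nonnegative, so $V^{\boldsymbol{\pi},\lambda}_i\ge \mathbb E^{\boldsymbol{\pi}}_{i,x}[J]$ for any policy. A randomized policy is, pathwise, a switching sequence adapted to a larger filtration. A standard randomization argument (conditioning on the auxiliary randomness, or a verification argument with the variational inequality \eqref{VI_sys}) then gives $\mathbb E^{\boldsymbol{\pi}}_{i,x}[J]\ge V^*_i(x)$. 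Equivalently, this is the right inequality $V^*_i\le V^\lambda_i\le V^{\boldsymbol{\pi},\lambda}_i$ of Theorem \ref{thm_diff_V}, applied with $\lambda=\boldsymbol{\lambda}_s$.

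For the upper bound, write
$$V^{\boldsymbol{\pi}(Z_s),\boldsymbol{\lambda}_s}_i-V^*_i=\bigl(V^{\boldsymbol{\pi}(Z_s),\boldsymbol{\lambda}_s}_i-V^{\bar{\boldsymbol{\pi}},\lambda}_i\bigr)+\bigl(V^{\lambda}_i-V^*_i\bigr),$$
and take $\lambda=\boldsymbol{\lambda}_s=(1+s)^{-1/2}$, the smallest temperature reached so far. Then $\boldsymbol{\lambda}_u-\lambda\ge0$ for $u\le s$, as the corollary implicitly needs. By \eqref{ineq_error}, the second bracket is at most $C\lambda\log\frac1\lambda\le C\frac{\log s}{\sqrt s}$.

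For the first bracket I compute the weights in Corollary \ref{cor_error}. We have $\int_0^u\boldsymbol{\lambda}_v\,dv=2(\sqrt{1+u}-1)$, so $w(u):=e^{\int_0^u\boldsymbol{\lambda}_vdv}=e^{2\sqrt{1+u}-2}$. Substituting $t=\sqrt{1+u}$ gives $\int_0^s w(u)\,du=e^{-2}\int_1^{\sqrt{1+s}}2te^{2t}dt\asymp \sqrt{s}\,e^{2\sqrt{1+s}}$ for large $s$. The first term is therefore $O(\sqrt s\,e^{-2\sqrt s})$, exponentially small even after multiplying by the prefactor $1+\frac1\lambda\log\frac1\lambda\sim\sqrt s\log s$.

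The main point is the second ratio, $\int_0^s w(u)(\boldsymbol{\lambda}_u-\boldsymbol{\lambda}_s)\,du\big/\int_0^s w$. The weight $w$ concentrates its mass on a window of length $O(\sqrt s)$ below $s$. On that window, $\boldsymbol{\lambda}_u-\boldsymbol{\lambda}_s\approx \frac12(1+s)^{-3/2}(s-u)$. With the same substitution, the numerator is $\int (1/t-1/T)\,2te^{2t}dt$ with $T=\sqrt{1+s}$, which is $\int_1^T 2(1-t/T)e^{2t}dt$. An integration by parts gives that this is $O(e^{2T}/T)$, while the denominator is of order $Te^{2T}$. The ratio is thus $O(1/T^2)=O(1/s)$. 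Multiplied by the prefactor $C(1+\sqrt s\log s)$, this contributes $O(\log s/\sqrt s)$.

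Summing the three contributions gives the claim for $s\ge 2$. For small $s$ the bound follows from the uniform $L^\infty$ bounds on $V^*$ and on $V^{\boldsymbol{\pi}(Z_s),\boldsymbol{\lambda}_s}$; the latter holds by the monotonicity in Lemma \ref{lem_policy_improment} together with a bound at $s=0$. The main obstacle is whether the constant in Corollary \ref{cor_error} stays uniform as $\lambda=\boldsymbol{\lambda}_s$ varies with $s$. In particular, the first expectation $\mathbb E^{\bar{\boldsymbol{\pi}}}\bigl[\int \sum D(\bar\pi|e^{Z_0})e^{-rt}dt\bigr]$ must be bounded by $C(1+\frac1\lambda\log\frac1\lambda)$ with $C$ depending only on $\|Z_0\|_\infty$ and the coefficients. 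I would verify this directly from $\bar\pi\le C_{up}/\lambda$ (Lemma \ref{lem_near_ineq}) and the boundedness of $Z_0$.
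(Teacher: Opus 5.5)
Your proposal is correct and follows the paper's proof essentially step by step: the lower bound via $V^{\boldsymbol{\pi}(Z_s),\boldsymbol{\lambda}_s}_i\ge V^{\boldsymbol{\lambda}_s}_i\ge V^*_i$, the split with reference temperature $\lambda=\boldsymbol{\lambda}_s$, \eqref{ineq_error} for the bias term, and the explicit weight integrals giving an exponentially small first ratio and an $O(1/s)$ second ratio, which against the $O(\sqrt{s}\log s)$ prefactor of Corollary \ref{cor_error} yields $O(\log s/\sqrt{s})$. Your uniformity check for the constant in Corollary \ref{cor_error} (via $\bar\pi^{ij}\le C_{up}/\lambda$ and boundedness of $Z_0$) is what the paper leaves implicit, but drop your small-$s$ remark: for $s\le 1$ the bound $C\log s/\sqrt{s}$ is nonpositive and it tends to $0$ as $s\to 1^+$, so no $L^\infty$ bound rescues it, and the corollary, like the paper's proof, only holds for $s\ge s_0>1$.
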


The convergence results obtained above are closely related to those of Sethi, Siska, and Zhang \cite{sethi2025entropy}, who studied entropy annealing for policy mirror descent in continuous-time stochastic control. Their framework concerns absolutely continuous controls, and the policy variable is a probability density over the control space. In contrast, our setting involves singular switching controls, where the policy is parameterized by the transition intensities of a continuous-time Markov chain. Despite these structural differences, both problems admit a KL-regularized variational formulation and a mirror descent dynamics in logarithmic policy coordinates. Consequently, the convergence estimates share a common structure, consisting of an optimization error controlled by the mirror descent flow and a regularization bias arising from the entropy penalty. In particular, under a suitable annealing schedule, both approaches converge to the value function of the original unregularized control problem. These parallels indicate that entropy annealing and mirror descent constitute a robust optimization principle that extends beyond classical drift control to stochastic switching systems.

Finally, it is instructive to compare the mirror descent flow with the classical
policy iteration algorithm for optimal switching problems. Given a policy $\boldsymbol{\pi}$,
policy iteration alternates between a policy evaluation step, in which the linear
system \eqref{eq_value_function} is solved to obtain $V^{\boldsymbol{\pi},\lambda}$, and a policy improvement
step, in which the policy is replaced by the greedy optimizer
\[
\tilde \pi_{ij}(x)
=
\exp\!\left(
\frac{V^{\boldsymbol{\pi},\lambda}_i(x)-V^{\boldsymbol{\pi},\lambda}_j(x)-G_{ij}}{\lambda}
\right).
\]
Thus policy iteration may be viewed as performing a full minimization of the local
Bellman objective at each iteration. In contrast, the mirror descent flow
\eqref{mirror_descent_flow} performs only an infinitesimal policy improvement in the dual
logarithmic coordinates $Z_{ij}=\log \pi_{ij}$. The update direction is determined
by the Bellman residual
\[
V^{\boldsymbol{\pi},\lambda}_j+G_{ij}-V^{\boldsymbol{\pi},\lambda}_i+\lambda Z_{ij},
\]
and therefore can be interpreted as a damped or continuous-time analogue of
policy iteration. While policy iteration often exhibits fast local convergence, its
analysis typically relies on discrete improvement arguments and may be sensitive
to approximation errors. The mirror descent flow, by contrast, preserves the
KL-geometric structure induced by entropy regularization, admits a Lyapunov
functional given by the value function, and naturally accommodates annealing
schedules for the temperature parameter. From this perspective, policy iteration
and mirror descent may be viewed as two realizations of the same underlying
KL-regularized optimization principle: the former performs exact greedy policy
updates, whereas the latter follows a continuous gradient-like trajectory in the
space of Gibbs policies.

A remaining challenge concerns the robustness of both policy iteration and
mirror descent under stochastic approximation. In practical implementations,
the value functions and policy updates are computed from finite samples and are
therefore contaminated by random noise. The convergence results established
here are deterministic and correspond to the idealized infinite-sample regime.
It would be interesting to quantify how such perturbations accumulate along the
iterations or flow trajectory and to derive finite-sample error bounds for the
resulting algorithms. Understanding the interaction between sampling noise,
entropy regularization, and the switching structure is an important direction
for future work.
 \section{Proofs of all Main Results}
 \subsection{Proof of Theorem \ref{thm_hjb_system}}
 Given any $M$, consider a smooth, non-decreasing cut-off function $\phi_M$ such that $\phi_M(x)=\exp(x)$ for $x<M$ and $\phi_M(x)=M+1$, for $x>M+1$. Consider the following cut-off elliptic system
\begin{equation}\label{cut-off-hjb}
 \mathcal L_iV^{\lambda,M}(x)+L_i(x)-rV^{\lambda,M}_i=\lambda(\phi_M(\frac{V^{\lambda,M}_i-V^{\lambda,M}-G_{ij}}{\lambda})-1), x \in\mathcal O,
\end{equation}
 with boundary condition $V^{\lambda,M}_i(x)=0$ on $\partial \mathcal O$. Since the righthand side is bounded and Lipschitz continuous, classical results for elliptic system (see \cite{evans1984multiple}) yields that there exists a classical solution. Then, we shall prove that $V^{\lambda,M}_i$ satisfies the inequality \eqref{ineq_l_infty_esti}, which means that it is also the solution of \eqref{hjb_system} for sufficiently large $M$.

 First, note that the function $w:=V^{\lambda,M}_i-\frac{C_{\text{coef}}+(m-1)\lambda}{r}$ satisfies
 $$
 \mathcal L_iw-rw=-L_i(x)+\lambda\sum_{i \neq j}(\phi_M(\frac{V^{\lambda,M}_i-V^{\lambda,M}_j-G_{ij}}{\lambda})-1)+C_{\text{coef}}+(m-1)\lambda>0.
 $$  
 Applying maximum principle, one gets that $w\le 0$, which gives the second inequality in \eqref{ineq_l_infty_esti}. Next, noting that $(0,0,...,0)$ is a subsolution of the elliptic system \eqref{cut-off-hjb}, one can use comparison principle (see Theorem \ref{thm_comparison}) to get the first inequality of \eqref{ineq_l_infty_esti}.
 
 Having proved the well-posedness of \eqref{hjb_system}, an application of classical verification theorem yields that the policy $\bar {\boldsymbol{\pi}}$ defined by \eqref{optim_cond} is an optimal one. This finishes the proof.
 \subsection{Proof of Lemma \ref{lem_lip}}
 We first estimation $|D V^\lambda_i(x)|$ for $x \in \partial \mathcal O$. Noting that $V^\lambda_i$ satisfies the  boundary condition, one gets that $D_{\nu} V^\lambda_i(x)=0$ for any direction $\nu$ tangent to $\partial \mathcal O$ at $x$.  Hence, to get a bound on   $|D V^\lambda_i(x)|$, we just need an estimation for $D_n V^\lambda_i(x)$, where $n$ is the outer-norm vector of $\partial \mathcal O$ at $x$.  For that purpose, consider the solution $U_i$ of the following PDE
 \begin{equation}
 	\mathcal L_i U_i+L_i(x)-rU_i+\lambda=0,
 \end{equation}
 with boundary condition $U_i(x)=0$ for $x \in \partial \mathcal O$. Classical theory for elliptic PDEs yields that 
 $$
 |D U_i(x)| \le C.
 $$
 Moreover, applying comparison principle, one can get that $V_i \le U_i$. This implies that 
 $$
 D_n V^\lambda_i(x) \le D_n U_i(x) \le C.
 $$
 On the other hand, since $V^\lambda_i(x) \ge 0$ for any $x \in \mathcal O$, it holds that $D _n V^\lambda_i (x)\ge 0.$ This give a lower bound on  $D _n V^\lambda_i (x)$. 
 
Next, we estimate $|D V^\lambda_i(x)|$ for $x \in \mathcal O$ using Berstein method.  Denote by $W^{ij}=\exp(\frac{V^\lambda_i-V^\lambda_j-G_{ij}}{\lambda})$,
and define $w_i=\frac12|D V^\lambda_i(x)|^2$.
Noting that $w_i=\frac12\sum_{\gamma} (D_\gamma V^\lambda_i)^2 $, one can compute that 
$$
D_\alpha w_i=D_{\alpha \gamma}V^\lambda_i D_\gamma V^\lambda_i,\text{ and } D_{\alpha\beta}w_i=D_{\alpha\beta\gamma}V^\lambda_iD_\gamma V^\lambda_i +D_{\alpha\gamma} V^{\lambda}_iD_{\beta\gamma} V^\lambda_i.
$$
Hence,
\begin{equation*}
\mathcal L_i w_i=a^{\alpha\beta}D_{\alpha\beta\gamma}V^\lambda_i D_\gamma V^\lambda_i+a^{\alpha\beta}D_{\alpha\gamma}V^{\lambda}_iD_{\beta\gamma}V^\lambda_i+b^\alpha_i D_{\alpha\gamma}V^\lambda_i D_\gamma V^\lambda_i.
\end{equation*}
On the other hand, it holds that 
\begin{equation*}
D_\gamma \mathcal L_i V^\lambda_i= a^{\alpha\beta} D_{\alpha\beta \gamma}V^\lambda_i+D_\gamma a^{\alpha\beta} D_{\alpha\beta} V^\lambda_i +b^\alpha_i D_{\alpha\gamma}V^\lambda_i +D_\gamma b^\alpha_i D_\alpha V^\lambda_i. 
\end{equation*}
 Differentiating \eqref{hjb_system}, we see that 
 \begin{equation*}
 	\begin{split}
 	D_\gamma \mathcal L_i V^\lambda_i=r D_\gamma V^\lambda_i -D_\gamma L_i+W^{ij}(D_\gamma V^\lambda_i-D_\gamma V^\lambda_j).
 \end{split}
 \end{equation*}
Thus, 
\begin{equation*}
	\begin{split}
		\mathcal L_i w_i=a^{\alpha\beta}D_{\alpha\gamma}V^\lambda_i D_{\beta\gamma}V^\lambda_i-D_\gamma a^{\alpha\beta} D_{\alpha\beta}V^\lambda_i D_\gamma V^\lambda_i-D_\gamma b^\alpha D_\alpha V^\lambda_i D_\gamma V^\lambda_i\\
		+r(D_\gamma V^\lambda_i)^2-D_\gamma L_i D_\gamma V^\lambda_i+W^{ij}(D_\gamma V^\lambda_i-D_\gamma V^\lambda_j)D_\gamma V^\lambda_i
	\end{split}
\end{equation*} 
 From the assumptions on the coefficients, we have that 
 $$
 a^{\alpha\beta}D_{\alpha\gamma}V^\lambda_i D_{\beta\gamma}V^\lambda_i \ge \sigma_0 |D_{\alpha\gamma} V^\lambda_i|^2,
 $$
 \begin{equation*}
\begin{split}
	&-D_\gamma a^{\alpha\beta} D_{\alpha\beta}V^\lambda_i D_\gamma V^\lambda_i\ge -\|D_\gamma a\|_{\infty}|D_{\alpha\beta} V^\lambda_i||D_\gamma V^\lambda_i|\\
	\ge& -\frac{\sigma_0}{2} |D_{\alpha \beta} V^\lambda_i|^2 -\frac{\|D_\gamma a\|_\infty}{2\sigma_0}|D_\gamma V^\lambda_i|^2,
	\end{split}
 \end{equation*}
 and
 $$
 -D_\gamma b^\alpha_i D_\alpha V^\lambda_i D_\gamma V^\lambda_i \ge -\|D_\gamma b_i\|_{\infty} |D_\gamma V^\lambda_i|^2. 
 $$
 Moreover, by H\"older inequality,
 $$
 W^{ij}(D_\gamma V^\lambda_i-D_\gamma V^\lambda_j)D_\gamma V^\lambda_i \ge \frac12 W^{ij}(|D_\gamma V^\lambda_i|^2-|D_\gamma V^\lambda_j|^2).
 $$
Hence,  for some constants $C_1$, $(w_1,w_2,...,w_m)$ is a super-solution of the following system
\begin{equation*}
\begin{split}
	\mathcal L_i w_i \ge &(r-\frac{\|D_\gamma a\|_\infty}{2\sigma_0}-\|D_\gamma b_i\|_\infty)w_i-\|D_\gamma L_i\|_\infty w^{\frac12}_i+\frac12 W^{ij}(w_i-w_j)\\
	\ge &\frac{(r-\frac{\|D_\gamma a\|_\infty}{2\sigma_0}-\|D_\gamma b_i\|_\infty)}{2}w_i-C_1+W^{ij}(w_i-w_j).
\end{split}
\end{equation*}
However, at the current stage, we can not use the comparison principle directly, as the coefficient before the zero order term needs not to be positive. Thus, we need some further modification.

Consider the function $e^{\delta x_1}$ for some positive constant $\delta$ to be decided later. One can compute that 
$$
\mathcal L_i e^{\delta x_1}=(a^{11}\delta^2+b^1_i\delta)e^{\delta x_1} \ge (\sigma_0\delta^2-\|b_i\|_\infty \delta)e^{\delta x_1}. 
$$
Denote by $\tilde w_i=w_i e^{\delta x_1}$. Then, it holds that 
$$
\mathcal L_i \tilde w_i=(\mathcal L_i e^{\delta x_1})w_i+e^{\delta x_1}\mathcal L_i w_i+(a^{1\alpha}+a^{\alpha1})\delta e^{\delta x_1}D_\alpha w_i.
$$
Moreover, we have that 
$$
D_\alpha \tilde w_i=e^{\delta x_1}D_\alpha w_i +1_{\{\alpha=1\}}\delta e^{\delta x_1}w_i.
$$
Hence, 
\begin{equation*}
\begin{split}
&\mathcal L_i \tilde w_i-(a^{1\alpha}+a^{\alpha1})\delta D_\alpha \tilde w_i\\
=&(\mathcal L_i e^{\delta x_1})w_i+e^{\delta x_1}\mathcal L_i w_i-2a^{11}\delta e^{\delta x_1}w_i\\
\ge&(\sigma_0 \delta^2-\|b_i\|_\infty\delta-2a^{11}\delta+\frac{(r-\frac{\|D_\gamma a\|_\infty}{2\sigma_0}-\|D_\gamma b_i\|_\infty)}{2}) \tilde w_i-C_1e^{\delta x_1}+W^{ij}(\tilde w_i-\tilde w_j)
\end{split}
\end{equation*}
Choose $\delta$ large enough such that $\sigma_0 \delta^2-\|b\|_\infty\delta-2a^{11}\delta+\frac{(r-\frac{\|D_\gamma a\|_\infty}{2\sigma_0}-\|D_\gamma b_i\|_\infty)}{2}>1$. Then, $\tilde w_i$ is a sub-solution of 
$$
\mathcal L_i \tilde w_i- (a^{1\alpha}+a^{\alpha1})\delta D_\alpha \tilde w_i\ge \tilde w_i-C_1e^{\delta x_1}+W^{ij}(\tilde w_i-\tilde w_j).
$$ 
It also easy to see that, for some sufficiently large constant $C_2$, $(C_2,C_2,...,C_2)$ is a super-solution. Hence, using comparison principle\footnote{There is an additional first order term, but the comparison principle still holds.}, we get that $\tilde w_i \le C_2$, which gives the desired result.
  \subsection{Proof of Lemma \ref{lem_near_ineq}}
  For any $i, j \in I_m$, define $w:=V^\lambda_i-V^\lambda_j$.  
  Then, from \eqref{hjb_system}, it holds that 
  \begin{equation*}
  	\begin{split}
  		\mathcal L_i w=&rw+L_j(x)-L_i(x)-(b^\alpha_i-b^\alpha_j)D_\alpha V^\lambda_j\\
  		&+\lambda\left(\exp(\frac{V^\lambda_i-V^\lambda_j-G_{ij}}{\lambda})-\exp(\frac{V^\lambda_j-V^\lambda_i-G_{ji}}{\lambda})\right)\\
  		&+\lambda \sum_{k\neq i,j} \exp(\frac{V^\lambda_i-V^\lambda_k-G_{ik}}{\lambda})-\exp(\frac{V^\lambda_j-V^\lambda_k-G_{jk}}{\lambda})
  	\end{split}
  \end{equation*}
  Assume that there exists $x^* \in \mathcal O$ such that 
  $$
  w(x^*)=\sup_{x \in \mathcal O}w(x)>G_{ij}+\lambda \log\frac{C_{up}}{\lambda}. 
  $$
Then,  it holds that $\mathcal L_i w(x^*) \le 0$.   It is easy to see that, at this point $x^*$, 
$$
\exp(\frac{V^\lambda_j-V^\lambda_i-G_{ji}}{\lambda})=\exp(-\log{\frac {C_{up}}{\lambda}}) \le 1,
$$
and
\begin{equation*}
\begin{split}
	&\exp(\frac{V^\lambda_i-V^\lambda_k-G_{ik}}{\lambda})-\exp(\frac{V^\lambda_j-V^\lambda_k-G_{jk}}{\lambda})\\
	=&\exp(\frac{V^\lambda_i-V^\lambda_k-G_{ik}}{\lambda})(1-\exp(\frac{-w+G_{ik}-G_{jk}}{\lambda}))\\
	=&\exp(\frac{V^\lambda_i-V^\lambda_k-G_{ik}}{\lambda})(1-\exp(\frac{-(w-G_{ij})+G_{ik}-G_{jk}-G_{ij}}{\lambda})) >0,
\end{split}
\end{equation*}
where the last inequality due to fact that $w(x^*)>G_{ij}$ and $G_{ij}+G_{jk} >G_{ik}$.  Hence, 
$$
0 \ge \mathcal L_i w(x^*) \ge rw(x^*)-2C_{coef}(1+C_{Lip})+\lambda(\exp(\frac{w(x^*)-G_{ij}}{\lambda})),
$$
which is a contradiction. 
\subsection{Proof of Theorem \ref{thm_diff_V}}
First, since $V^*_i$ is the solution of \eqref{VI_sys}, it holds that 
$$
\mathcal L_i V^*_i +L_i(x)-rV^*_i \ge 0,
$$
and 
$$
V^*_i\le V^*_j+G_{ij}.
$$
Hence, it is a sub-solution of \eqref{hjb_system}, i.e.,
$$
\mathcal L_i V^*_i+L_i(x)-rV^*_i \ge \lambda\sum_{i\neq j}(\exp(\frac{V^*_i-V^*_j-G_{ij}}{\lambda})-1).
$$ 
From the comparison principle, it holds that $V^\lambda_i \ge V^*_i$. 

Next, we prove the first inequality by contradiction. Let $\iota:=\frac{(1-\kappa(\lambda))C_{coef}+(m-1)\lambda}{r}$ and  assume that 
$$
\inf_{i\in I_m,x \in \mathcal O} V^*_i-\kappa(\lambda)V^\lambda_i+\iota=l<0,
$$
and it attains at some point $(i,z)$. 

If $z \in C_{i}:=\left\{x \in \mathcal O|V^*_i(x) <V^*_j(x)+G_{ij},\text{ for any $j \neq i$}\right\}$,  then 
$$
\mathcal L_i V^*_{i}(z)=rV^*(z)-L_i(z),
$$
and
\begin{equation}
	\begin{split}
0 &\le \mathcal L_i V^*_i(z)-\kappa(\lambda)\mathcal L_iV^\lambda_i(z)\\
=&r(V^*(z)-\kappa(\lambda)V^\lambda_i(z))-(1-\kappa(\lambda))L_i(z)-\lambda\kappa(\lambda)\sum_{j\neq i}(\exp(\frac{V^\lambda_i-V^\lambda_j-G_{ij}}{\lambda})-1)\\
\le &r(l-\iota)+(1-\kappa(\lambda))C_{coef}+(m-1)\lambda<0,
\end{split}
\end{equation}
which is a contradiction. Hence, we must have $z  \notin C_i$, which means that there exists an index, say $j$, such that 
$$
V^*_i(z)=V^*_j(z)+G_{ij}.
$$
Then,  
\begin{equation*}
\begin{split}
	&V^*_j(z)+G_{ij}=\kappa(\lambda)V^\lambda_i(z)-\iota+l \\
	\le& \kappa(\lambda)(V^j_\lambda+G_{ij}+\lambda\log \frac {C_{up}}{\lambda})-\iota+l \\
	\le &\kappa(\lambda)V^\lambda_j(z)+G_{ij}-\iota+l,
\end{split}
\end{equation*}
where the first inequality is due to the result of Lemma \ref{lem_near_ineq} and the second is due to the definition of $\kappa(\lambda)$. Hence, $(j,z)$ is also a minimum point. Repeating previous argument, one can find a sequence of index $(i_1,i_2,...,i_p)$ such that $i_p=i_1$ and, for $k=1,2,...,p-1$,
$$
V^*_{i_k}(z)=V^*_{i_{k+1}}(z)+G_{i_{k}i_{k+1}}
$$
Summing these equations, one gets that 
$$
\sum_{k=1}^{p-1}G_{i_{k}i_{k+1}}=0,
$$
which is a contradiction according to our assumption on $G$. 
\subsection{Proof of Lemma \ref{lem_performance_diff}}
Since the value functions belongs to $W^{2,p^*}$ and $tr(\sigma\sigma^T D^2\varphi) \in L^\infty$, one can apply It\^o formula related to continuous time Markov chain and take expectation to get that, for $\varphi_i=V^{\boldsymbol{\pi},\lambda}_i$ and $V^{\tilde{\boldsymbol{\pi}},\lambda}_i$,   
\begin{equation}\label{eq_ito_expec}
\varphi_i(x)=-\mathbb E_{i,x}^{\boldsymbol{\pi}}\left[\int_0^{\tau_{\mathcal O}}   \left( \mathcal L_{I_t}\varphi_{I_t}(X_t)+\sum_{j\neq I_t}(\varphi_{j}-\varphi_{I_t})(X_t)\pi^{I_tj} \right)e^{-rt}dt\right]
\end{equation}
Note that  $V^{\boldsymbol{\pi},\lambda}_i$ and $V^{\tilde{\boldsymbol{\pi}},\lambda}_i$ are solutions of \eqref{eq_value_function} corresponding policies $\boldsymbol{\pi}$ and $\tilde{\boldsymbol{\pi}}$. It holds that 
\begin{equation*}
\begin{split}
&\mathcal L_i V^{\tilde{\boldsymbol{\pi}},\lambda}_i-\mathcal L_iV^{{\boldsymbol{\pi}},\lambda}_i\\
=&\sum_{j\neq i} (V^{{\boldsymbol{\pi}},\lambda}_j+G_{ij}-V^{\boldsymbol{\pi},\lambda}_i)\pi^{ij}+\lambda D(\pi^{ij}|1)-\sum_{j\neq i} (V^{\tilde{\boldsymbol{\pi}},\lambda}_j+G_{ij}-V^{\tilde{\boldsymbol{\pi}},\lambda}_i)\tilde \pi^{ij}-\lambda D(\tilde \pi^{ij}|1)\\
=&(V^{{\boldsymbol{\pi}},\lambda}_j-V^{\boldsymbol{\pi},\lambda}_i-V^{\tilde{\boldsymbol{\pi}},\lambda}_j+V^{\tilde{\boldsymbol{\pi}},\lambda}_i)\pi^{ij}+(V^{\tilde{\boldsymbol{\pi}},\lambda}_j+G_{ij}-V^{\tilde{\boldsymbol{\pi}},\lambda}_i+\lambda \log \tilde \pi^{\ij})( \pi^{ij}-\tilde \pi^{ij})\\
&\qquad+\lambda D(\pi^{ij}|\tilde \pi^{ij}),
\end{split}
\end{equation*}
where we use the equation that 
$$
D(x|1)-D(y|1)=(x-y)\log y+D(x|y).
$$
Using \eqref{eq_ito_expec} and above equation, we get the desired result.  
\subsection{Proof of Lemma \ref{lem_policy_improment}}
Denote by $Y:=\mathcal C(\bar{\mathcal O},\mathbb R^m)$ equipped with supreme norm. Let us define a mapping $\mathcal T^\lambda$ from $\mathcal A^r_G$ to $Y$. For any $Z \in \mathcal A^r_G$, $\mathcal T^\lambda(Z)=(V^{\boldsymbol{\pi}(Z),\lambda}_1,V^{\boldsymbol{\pi}(Z),\lambda}_2,...,V^{\boldsymbol{\pi}(Z),\lambda}_m)$ is the solution of the following linear system:
 \begin{equation}\label{eq_value_function_Z}
	\mathcal L_i V^{\boldsymbol{\pi}(Z),\lambda}_i+L_i(x)-r V^{\boldsymbol{\pi}(Z),\lambda}_i+\sum_{ j \neq i}(V^{\boldsymbol{\pi}(Z),\lambda}_j+G_{ij}-V^{\boldsymbol{\pi}(Z),\lambda}_i)\pi^{ij}+\lambda D(\exp(Z^{ij})|1)=0,  
\end{equation}
with the boundary condition $V^{\boldsymbol{\pi}(Z),\lambda}_i(x)=0$ on $\partial \mathcal O$.  For any $m>1$, define a subset $U_m \subset \mathcal A^r_G$ such that 
$$
U_m=\left\{ Z \in \mathcal A^r_G | Z^{ij} (x)\le m\right\}. 
$$
We decompose the proof in the following steps.

{\it Step 1.} For any $Z \in U_m$ and $\lambda \le \lambda_0$, $0 \le V^{\boldsymbol{\pi}(Z),\lambda}_i(x) \le C_1(1+me^m)$, where the constants $C_1$ depends on the coefficients and $\lambda_0$. In fact, since $Z \in U_m$, it holds that $\pi^{ij}=\exp(Z^{ij})$ is bounded by $e^m$. Thus, $V^{\boldsymbol{\pi},\lambda}_i$ is the solution of a linear elliptic system with bounded coefficients. One see that $0$ and $C_1(1+me^m)$ are sub and super solution respectively for some constant $C_1$ sufficiently large. Hence, we have the claim using comparison principle.

{\it Step 2.}  We prove that $\mathcal T^\lambda$ is locally Lipschitz in $Z$, i.e.  for any $Z_1,Z_2 \in U_m $ and $\lambda \le \lambda_0$, 
\begin{equation}\label{ineq_local_Lip}
\sup_{i,x}|V^{\boldsymbol{\pi}(Z_1),\lambda}_i(x)-V^{\boldsymbol{\pi}(Z_2),\lambda}_i(x)| \le C_m\sup_{i\neq j,x}|Z^{ij}_1(x)-Z^{ij}_2(x)|,
\end{equation}
for some constant $C_m$ depending on  the coefficients, $\lambda_0$ and $m$.

Denote by $\Delta_i:=V^{\boldsymbol{\pi}(Z_1),\lambda}_i-V^{\boldsymbol{\pi}(Z_2),\lambda}_i$. We see that it solves
\begin{equation*}
\begin{split}
&\mathcal L_i \Delta_i-r\Delta_i-\sum_{j \neq i}(\Delta_i-\Delta_j)\exp(Z^{ij}_1)\\
=&\sum_{j \neq i}(V^{\boldsymbol{\pi}(Z_2),\lambda}_j+G_{ij}-V^{\boldsymbol{\pi}(Z_2),\lambda}_i)(\exp(Z^{ij}_2)-\exp(Z^{ij}_1))\\
&\qquad+\lambda(D(\exp(Z^{ij}_2)|1)-(D(\exp(Z^{ij}_1)|1))
\end{split}
\end{equation*}
with boundary condition $\Delta_i=0$ on $\partial \mathcal O$. Using the claim of Step 1, it holds that the right hand side is bounded by $C_m\sup_{i\neq j,x}|Z^{ij}_1(x)-Z^{ij}_2(x)|$  for  some constant $C_m$ depending on  the coefficients, $\lambda_0$ and $m$. Thus, one can get \eqref{ineq_local_Lip} using comparison principle once again.

{\it Step 3.} Consider a function $\chi_n$ as $\chi_m(x) =-x \wedge m$. 
Then, one can define a functional $\mathcal U^\lambda$  from $U_m$ to itself as 
$$
(\mathcal U^\lambda(Z))^{ij}=\chi_m(V^{\boldsymbol{\pi}(Z),\lambda}_j+G_{ij}-V^{\boldsymbol{\pi}(Z),\lambda}_i+\lambda Z^{ij}).
$$
Assume that there exists $Z \in C^1([0,S],U_m)$ for some $S>0$, such that 
$$
\partial_s Z_s=\mathcal U^{\boldsymbol{\lambda}_s}(Z_s).
$$
Then, we claim that $\partial_s V^{\boldsymbol{\pi}(Z_s),\boldsymbol{\lambda}_s}_i(x)\le 0$ for any $i \in \mathbb I_m$ and $x \in \mathcal O$.

To prove this, we use Lemma \ref{lem_performance_diff} to get that
$$
V^{\boldsymbol{\pi}(Z_s),\boldsymbol{\lambda}_s}_i(x)-V^{{\boldsymbol{\pi}(Z_{s+h})},\boldsymbol{\lambda}_{s+h}}_i(x)=\boldsymbol{\lambda}_s I_1+I_2+I_3,
$$ 
where 
$$
I_1= \mathbb E_{i,x}^{\boldsymbol{\pi}(Z_s)}\left[ \int_0^{\tau_{\mathcal O}}\sum_{j \neq I_t}(D(\exp(Z^{I_tj}_s)(X_t))|\exp(Z^{I_tj}_{s+h})(X_t)) )e^{-rt}dt\right],
$$
\begin{equation*}
	\begin{split}
I_2=\mathbb E^{ {\boldsymbol{\pi}(Z_s)}}_{i,x}\bigg[\int_0^{\tau_{\mathcal O}}  \sum_{j \neq  I_{t}}&\left(V^{{\boldsymbol{\pi}(Z_{s+h})},\boldsymbol{\lambda}_s}_j+G_{I_tj}-V^{{\boldsymbol{\pi}(Z_{s+h})},\boldsymbol{\lambda}_s}_i+\boldsymbol{\lambda}_s \log Z^{I_tj}_{s+h}\right)(X_t)\\
&\cdot\left(\exp(Z^{I_tj}_s(X_t))-(\exp(Z^{I_tj}_{s+h}(X_t))\right)e^{-rt}dt\bigg],
\end{split}
\end{equation*}
and
\begin{equation*}
	\begin{split}
		I_3&=V^{\boldsymbol{\pi}(Z_{s+h}),\boldsymbol{\lambda}_s}_i(x)-V^{{\boldsymbol{\pi}(Z_{s+h})},\boldsymbol{\lambda}_{s+h}}_i(x)\\
		&=(\boldsymbol{\lambda}_{s}-\boldsymbol{\lambda}_{s+h})\mathbb E^{ {\boldsymbol{\pi}(Z_{s+h})}}_{i,x}\left[ \int_0^{\tau_{\mathcal O}}\sum_{j \neq I_t}D(\exp(Z^{I_tj}_{s+h})(X_t)|1)e^{-rt}dt \right].
		\end{split}
	\end{equation*}
Noting that $\partial_y D(x|y)=1-\frac xy$, direct computation implies that  $\lim_{h \rightarrow 0} \frac{I_1}{h}=0$. It is also obvious that 
$$
\lim_{h \rightarrow 0} \frac{I_3}{h}=-\dot{\boldsymbol{\lambda}}_s\mathbb E^{ {\boldsymbol{\pi}(Z_{s+h})}}_{i,x}\left[ \int_0^{\tau_{\mathcal O}}\sum_{j \neq I_t}D(\exp(Z^{I_tj}_{s+h})(X_t)|1)e^{-rt}dt \right] \ge 0.
$$ 
For the middle term, we see that the limit of $\lim_{h \rightarrow 0} \frac{I_2}{h}$ goes to 
\begin{equation*}
\begin{split}
\mathbb E^{ {\boldsymbol{\pi}(Z_s)}}_{i,x}\left[\int_0^{\tau_{\mathcal O}}\left(V^{{\boldsymbol{\pi}(Z_{s})},\boldsymbol{\lambda}_s}_j+G_{I_tj}-V^{{\boldsymbol{\pi}(Z_{s})},,\boldsymbol{\lambda}_s}_i+\boldsymbol{\lambda}_s \log Z^{I_tj}_{s}\right)\partial_s Z^{I_tj}_s \exp(Z^{I_tj}_s) e^{-rt}dt\right].
	\end{split}
\end{equation*}
Then, from the definition of $\mathcal U^\lambda$, we see that the integrand is always non-negative, which implies that $\lim_{h \rightarrow 0} \frac{I_2}{h} \ge 0$. Combining all these together, we prove the claim.

{\it Step 4.}  At last, let us prove the well-posedness of the mirror descent flow for any fixed time interval $[0,S]$.  For initial state $Z_0 \in \mathcal A^r_G$, denote by $M:=\sup_{i\neq j,x}  |Z^{ij}_0(x)|+\sup_{i,x}V^{\boldsymbol{\pi}(Z_0),\boldsymbol{\lambda}_0}_i(x)+\sup_{i \neq j}G_{ij}$ and choose $m=(3+\lambda_0+\frac{1}{\lambda_S})M+1 $. Recall that we have shown that $\mathcal U^{\lambda}$ is Lipschitz continuous on $U_m$ with Lipschitz constant $C_m$ for any $\lambda \le \lambda_0$. Denote by $X_{\varepsilon}:=C^1([0,\varepsilon],U_m)$ with $\varepsilon=\frac{1}{m+C_m}$. Then, one can define a mapping $\Phi$  from $X_\varepsilon$ to itself as 
$$
\Phi(X_\cdot)_s=Z_0+\int_0^s \mathcal U^{\boldsymbol{\lambda}_s}(X_u)du,
$$
which is a contraction. Thus, $\Phi$ admits a unique fixed point $\tilde Z$, which is a solution of
$$
\partial_s \tilde Z_s=\mathcal U^{\boldsymbol{\lambda}_s}(\tilde Z_s). 
$$
Moreover, it holds that $\partial_s V^{\boldsymbol{\pi}(\tilde Z_s),\boldsymbol{\lambda}_s}_i(x) \le 0$.

Next, we want to show that  the solution of above equation can be extended beyond the interval $[0,\varepsilon]$. For that purpose, we need some a priori estimate for $\tilde Z$. Since $0 \le V^{\boldsymbol{\pi}(\tilde Z_s),\boldsymbol{\lambda}_s}_i \le V^{\boldsymbol{\pi}(\tilde Z_0),\lambda_0}_i$, it holds that 
$$
\partial _s \tilde Z_s\le (M-\boldsymbol{\lambda}_s \tilde Z_s) \wedge m  \le (M-\boldsymbol{\lambda}_s \tilde Z_s).
$$
Applying Gronwall's lemma, it holds that 
\begin{equation}\label{Z_up_bound}
	\tilde Z_s \le M+\int_0^s e^{-\int_u^s \boldsymbol{\lambda}_v dv} Mdu \le \frac{M}{\boldsymbol{\lambda}_s}+M \le m.
\end{equation}
On the other hand, 
$$
\partial_s \tilde Z_s \ge (-M-\boldsymbol{\lambda}_s \tilde Z_s)\wedge m.
$$
Consider  a function $z_s:=-M-\int_0^s e^{-\int_u^s \boldsymbol{\lambda}_v dv}Mdu $.  As $\boldsymbol{\lambda}$ is non-increasing, it holds that $z_s \ge -M-\frac{M}{\boldsymbol{\lambda}_s}$. Thus, $-M-\boldsymbol{\lambda_s}z_s \le M<m$,  which implies that $z_s$ is the solution of 
$$
\partial_s z_s=(-M-\boldsymbol{\lambda}_s z_s) \wedge m.
$$
From comparison principle of ODE, it holds that 
\begin{equation}\label{Z_lower_bound}
	\tilde Z_s \ge z_s \ge -m.
\end{equation}
Thus, $\tilde Z_s$ is uniformly bounded. Starting from $\varepsilon$, one can repeat previous argument to extend the solution $\tilde Z$ to the interval $[\varepsilon,2\varepsilon]$ and \eqref{Z_up_bound} and \eqref{Z_lower_bound} also hold. Repeating this procedure, we obtain that there exists  $\tilde Z \in  C^1([0,S],U_m)$ satisfying
$$
\partial_s \tilde Z_s=\mathcal U^{\boldsymbol{\lambda}_s}(Z_s).
$$  
However, from previous a priori estimate, it holds that 
\begin{equation*}
	\begin{split}
		&V^{\boldsymbol{\pi}(\tilde Z_s),\boldsymbol{\lambda}_s}_j(x)+G_{ij}-V^{\boldsymbol{\pi}(\tilde Z_s),\boldsymbol{\lambda}_s}_i(x)+\boldsymbol{\lambda}_s \tilde Z^{ij}_s \\
		\le& V^{\boldsymbol{\pi}(Z_0),\boldsymbol{\lambda}_0}_j(x)+G_{ij}+(1+\boldsymbol{\lambda}_s)M \le m.\\
		\end{split}
\end{equation*}
Hence, $\tilde Z$ is the mirror descent flow, which finishes the proof.
\subsection{Proof of Theorem \ref{thm_error_bound}}
Denote by 
$$
\mathcal D_{i,x}(\bar{\boldsymbol{\pi}}|\boldsymbol{\pi}(Z_s)):=\mathbb E^{\bar{\boldsymbol{\pi}}}_{i,x}\left[\int_0^{\tau_{\mathcal O}}\sum_{j \neq  I_t}( D(\bar \pi^{ I_tj}( X_t)|\exp(Z^{ij}_s
( X_t))))e^{-rt}dt\right].
$$
Taking derivative with respect to $s$, one can get that 
\begin{equation*}
\partial_s 	\mathcal D_{i,x}(\bar{\boldsymbol{\pi}}|\boldsymbol{\pi}(Z_s))=\mathbb E_{i,x}^{\bar{\boldsymbol{\pi}}}\left[ \int_0^{\tau_\mathcal O} \sum_{j \neq  I_t}\left( \exp(Z^{ I_tj}_s)(X_t)-\bar \pi^{ I_t j}(X_t)\right)\partial_s Z^{ I_tj}_s( X_t)  e^{-rt}dt\right].
\end{equation*}
From the definition of mirror descent flow \eqref{mirror_descent_flow} and Lemma \ref{lem_performance_diff}, the right hand side of above equation equals $V^{\bar{ \boldsymbol{\pi}},\boldsymbol{\lambda}_s}_i(x)-V^{\boldsymbol{\pi}(Z_s),\boldsymbol{\lambda}_s}_i(x)-\boldsymbol{\lambda}_s \mathcal D_{i,x}(\bar {\boldsymbol{\pi}}|\boldsymbol{\pi})(Z_s)$. Thus, 
$$
\partial_s 	\mathcal D_{i,x}(\bar{\boldsymbol{\pi}}|\boldsymbol{\pi}(Z_s))=V^{\bar{ \boldsymbol{\pi}},\boldsymbol{\lambda}_s}_i(x)-V^{\boldsymbol{\pi}(Z_s),\boldsymbol{\lambda}_s}_i(x)-\boldsymbol{\lambda}_s \mathcal D_{i,x}(\bar {\boldsymbol{\pi}}|\boldsymbol{\pi})(Z_s).
$$
Denote $\Lambda_t=e^{\int_0^t \boldsymbol{\lambda}_sds}$. One can solve above ODE as  
$$
\Lambda_s \mathcal D_{i,x}(\bar{\boldsymbol{\pi}}|\boldsymbol{\pi}(Z_s))-\mathcal D_{i,x}(\bar{\boldsymbol{\pi}}|\boldsymbol{\pi}(Z_0))=\int_0^s  \Lambda_u (V^{\bar{\boldsymbol{\pi}},\boldsymbol{\lambda}_u}_i(x)-V^{\boldsymbol{\pi}(Z_u),\boldsymbol{\lambda}_u}_i(x))  du.
$$
Then, we have 
\begin{equation*}
\begin{split}
&V^{\bar{\boldsymbol{\pi}},\boldsymbol{\lambda}_u}_i(x)-V^{\boldsymbol{\pi}(Z_u),\boldsymbol{\lambda}_u}_i(x)\\
=&V^{\bar{\boldsymbol{\pi}},\boldsymbol{\lambda}_u}_i(x)-V^{\bar{\boldsymbol{\pi}},\lambda}_i(x)+V^{\bar{\boldsymbol{\pi}},\lambda}_i(x)-V^{\boldsymbol{\pi}(Z_u),\boldsymbol{\lambda}_u}_i(x) \\
\le& V^{\bar{\boldsymbol{\pi}},\boldsymbol{\lambda}_u}_i(x)-V^{\bar{\boldsymbol{\pi}},\lambda}_i(x)+V^{\bar{\boldsymbol{\pi}},\lambda}_i(x)-V^{\boldsymbol{\pi}(Z_s),\boldsymbol{\lambda}_s}_i(x)\\
=&(\boldsymbol{\lambda}_u-\lambda) \mathbb E^{\bar{\boldsymbol{\pi}}}_{i,x}\left[\int_0^{\tau_{\mathcal O}} (\sum_{j \neq I_t}D(\bar \pi^{I_tj}|1))e^{-rt}dt\right]+ V^{\bar{\boldsymbol{\pi}},\lambda}_i(x)-V^{\boldsymbol{\pi}(Z_s),\boldsymbol{\lambda}_s}_i(x)	
\end{split}
\end{equation*}
where the inequality due to fact that $V^{\boldsymbol{\pi}(Z_u),\boldsymbol{\lambda}_u}_i(x)$ is non-increasing with respect to $u$. Noting that $\mathcal D_{i,x}(\bar{\boldsymbol{\pi}}|\boldsymbol{\pi}(Z_s)) \ge 0$, one gets 
\begin{equation*}
	\begin{split}
&V^{\boldsymbol{\pi}(Z_s),\boldsymbol{\lambda}_s}_i(x)-V^{\bar{\boldsymbol{\pi}},\lambda}_i(x)\\
\le& \frac{1}{\int_0^s e^{\int_0^u \boldsymbol{\lambda}_v dv}du} \mathbb E^{\bar{\boldsymbol{\pi}}}_{i,x}\left[\int_0^{\tau_{\mathcal O}}\sum_{j \neq  I_t}( D(\bar \pi^{ I_tj}( X_t)|\exp(Z^{ij}_s
(X_t))))e^{-rt}dt\right]\\
&\qquad+\frac{\int_0^s  e^{\int_0^u \boldsymbol{\lambda}_v dv}(\boldsymbol{\lambda}_u-\lambda)du}{\int_0^s e^{\int_0^u \boldsymbol{\lambda}_v dv}du} \mathbb E^{\bar{\boldsymbol{\pi}}}_{i,x}\left[\int_0^{\tau_{\mathcal O}} (\sum_{j \neq I_t}D(\bar \pi^{I_tj}(X_t)|1))e^{-rt}dt\right].
\end{split}
\end{equation*}
\subsection{Proof of Corollary \ref{cor_annealing}}
From Theorem \ref{thm_error_bound} and the definition of optimal value function, one immediately gets that 
$$
V^{\boldsymbol{\pi}(Z_s),\boldsymbol{\lambda}_s}_i(x) \ge V^{\boldsymbol{\lambda}_s}_i(x)\ge V^*_i(x).
$$
This gives the first inequality. Then, 
$$
V^{\boldsymbol{\pi}(Z_s),\boldsymbol{\lambda}_s}_i(x)- V^*_i(x)=V^{\boldsymbol{\pi}(Z_s),\boldsymbol{\lambda}_s}_i(x)-V^{\boldsymbol{\lambda}_s}_i(x)+V^{\boldsymbol{\lambda}_s}_i(x)-V^*_i(x).
$$
For the second term, we use \eqref{ineq_error} to get that 
$$
V^{\boldsymbol{\lambda}_s}_i(x)-V^*_i(x) \le C \boldsymbol{\lambda}_s \log \frac{1}{\boldsymbol{\lambda}_s} \le C \frac{\log s}{\sqrt{s}}.
$$  
For the first term, we shall utilize the result of Corollary \ref{cor_error}. One can calculate that (see also the proof of Theorem 2.17 in \cite{sethi2025entropy})
$$
\int_0^s e^{\int_0^u \boldsymbol{\lambda_v}dv}du=\frac12\left( e^{2\sqrt{s+1}-2}(2\sqrt{s+1}-1)-1\right),
$$   
and 
$$
\int_0^s \boldsymbol{\lambda}_ue^{\int_0^u \boldsymbol{\lambda_v}dv}du=e^{2\sqrt{s+1}-2}-1.
$$
Hence,
$$
\frac{1}{\int_0^s e^{\int_0^u \boldsymbol{\lambda_v}dv}du} \le Ce^{-2\sqrt{s+1}},
$$
and
$$
\frac{\int_0^s (\boldsymbol{\lambda}_u-\boldsymbol{\lambda}_s)e^{\int_0^u \boldsymbol{\lambda_v}dv}du}{\int_0^s e^{\int_0^u \boldsymbol{\lambda_v}dv}du}=\frac{e^{2\sqrt{s+1}-2}-1}{\frac12\left( e^{2\sqrt{s+1}-2}(2\sqrt{s+1}-1)-1\right)}-\frac{1}{\sqrt{s+1}} \le  \frac{C}{s}.
$$
This implies that 
$$
V^{\boldsymbol{\pi}(Z_s),\boldsymbol{\lambda}_s}_i(x) \le C\frac{\log s}{\sqrt{s}}.
$$
Combining two estimations together, we get the desired result.
 \appendix
 \section{A Comparison Principle for Elliptic Systems}
 The following comparison principle is a direct consequence of
 \cite[Theorem 4.7]{ishii1991viscosity}. We state it here for convenience,
 since it is repeatedly used throughout the paper. 
 \begin{theorem}[Comparison Principle]\label{thm_comparison}
Let $(f_1,f_2,...,f_m):\mathbb R^m \rightarrow \mathbb R^m$ satisfy
\begin{enumerate}
	\item  $f_i(u_1,u_2,...,u_m)$ is non-decreasing with respect to $u_i$ and non-increasing with respect to $u_j$ for $j\neq i$;
	\item $f_i(u_1+v,u_2+v,...,u_m+v)=f_i(u_1,u_2,...,u_m)$ for any $v \in \mathbb R$. 
\end{enumerate}
Suppose that  $(U_1,U_2,...,U_m)$ (resp. $(V_1,V_2,...,V_m)$) is a super-solution (resp. sub-solution), i.e.
$$
\mathcal L_i U_i-rU_i \le f_i(U_1,U_2,...,U_m),
$$ 	
$$
\text{(resp. $\mathcal L_i V_i-rV_i \ge f_i(V_1,V_2,...,V_m)$).}
$$
 If $V_i \le U_i $ on $\partial \mathcal O$, then, $V_i \le U_i $ in $\bar{ \mathcal O}$.
 \end{theorem}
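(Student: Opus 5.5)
The plan is the classical maximum-principle argument for weakly coupled elliptic systems. I first treat sub- and super-solutions that are $C^2(\mathcal O)\cap C(\bar{\mathcal O})$, and then explain how the same argument extends to the viscosity and $W^{2,p}$ settings in which the theorem is used. Set
$$
M:=\max_{i\in\mathbb I_m}\max_{x\in\bar{\mathcal O}}\bigl(V_i(x)-U_i(x)\bigr),
$$
which is attained because $\bar{\mathcal O}$ is compact and the index set is finite. Suppose, for contradiction, that $M>0$. Since $V_i\le U_i$ on $\partial\mathcal O$, the maximum is attained at some $(i_0,x_0)$ with $x_0\in\mathcal O$.

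At $x_0$, the function $V_{i_0}-U_{i_0}$ has an interior maximum. Hence $D(V_{i_0}-U_{i_0})(x_0)=0$ and $D^2(V_{i_0}-U_{i_0})(x_0)\le 0$. Since $A=\sigma\sigma^T$ is positive semidefinite, this gives $\mathcal L_{i_0}(V_{i_0}-U_{i_0})(x_0)\le 0$. Subtracting the super-solution inequality from the sub-solution inequality yields
$$
\mathcal L_{i_0}(V_{i_0}-U_{i_0})-r(V_{i_0}-U_{i_0})\ge f_{i_0}(V)-f_{i_0}(U),
$$
so at $x_0$ the left-hand side is at most $-rM<0$.

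The key step is to show that the right-hand side is nonnegative at $x_0$, using the two structural assumptions on $f$. Set $W_j:=U_j(x_0)+M$. By the shift invariance in assumption 2, $f_{i_0}(U(x_0))=f_{i_0}(W)$. By the choice of $(i_0,x_0)$, we have $V_{i_0}(x_0)=W_{i_0}$ and $V_j(x_0)\le W_j$ for every $j\neq i_0$. Since $f_{i_0}$ is non-increasing in the off-diagonal arguments (assumption 1), $f_{i_0}(V(x_0))\ge f_{i_0}(W)=f_{i_0}(U(x_0))$. Thus the right-hand side is $\ge 0$, contradicting $-rM<0$. Hence $M\le 0$, which is the claim. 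The strict discount $r>0$ is what makes the inequality strict, so no perturbation by $\varepsilon|x|^2$ is needed.

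I expect the main obstacle to be regularity, since the comparison is applied in the paper to viscosity solutions (for instance $V^*$) and to $W^{2,p}$ solutions. For $W^{2,p}$ functions with $p>d$, I would replace the pointwise second-order argument by Bony's maximum principle: $\operatorname{ess\,liminf}_{x\to x_0}\mathcal L_{i_0}(V_{i_0}-U_{i_0})\le 0$ at an interior maximum. The rest of the argument then goes through, using the continuity of $f$ in a neighbourhood of $x_0$. For viscosity sub- and super-solutions, I would use the doubling-of-variables argument: maximize
$$
V_i(x)-U_i(y)-\frac{|x-y|^2}{2\varepsilon}
$$
over $i$ and $x,y\in\bar{\mathcal O}$, and apply Ishii's lemma at the maximizing index $i_0$. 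The error terms are controlled by the Lipschitz continuity of $\sigma$ and $b_i$, exactly as in \cite[Theorem 4.7]{ishii1991viscosity}. The coupling is handled with the same shift-and-monotonicity trick as above, now applied at the points $(x_\varepsilon,y_\varepsilon)$, and one then lets $\varepsilon\to 0$. Finally, the footnoted variant with an extra bounded first-order term only changes the drift $b_i$, so it is covered by the same proof.
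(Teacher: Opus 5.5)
Your proposal is correct and takes essentially the same approach as the paper. The decisive step is shifting $U(x_0)$ by the maximal gap $M$ and then using shift invariance together with off-diagonal monotonicity to get $f_{i_0}(V(x_0))\ge f_{i_0}(U(x_0))$; this is exactly the paper's verification of the quasi-monotonicity condition (A3), after which the paper delegates the rest to \cite[Theorem 4.7]{ishii1991viscosity}, whereas you write out the classical maximum-principle argument and sketch the doubling-of-variables step (where the same trick applies at $(x_\varepsilon,y_\varepsilon)$).
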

 \begin{proof}
It suffices to just verify the monotone condition (A3) in \cite{ishii1991viscosity}. More precisely, if $r,s \in \mathbb R^m$, there exists $j\in I_m$ such that 
 $$
 r_j-s_j=\max_i (r_i-s_i)>0,
 $$
 then $f_j(r_1,r_2,...,r_m) \ge f_j(s_1,s_2,...,s_m)$. To see  this, denote by $v=r_j-s_j$. Then, we have that 
 $$
 f_j(r_1,r_2,...,r_m)=f_j(r_1-v,r_2-v,...,r_m-v) \ge f_j(s_1,s_2,...,s_m),
 $$
 where the inequality due to the non-increasing assumption of $f_j$.
 \end{proof}
 In this paper, the nonlinear terms $f_i$ are of the following two forms
 $$
 f_i(u_1,u_2,...,u_m)=\lambda \sum_{i \neq j} \left(\exp(\frac{u_i-u_j-G_{ij}}{\lambda})-1\right),
 $$  
 and
 $$
 f_i(u_1,u_2,...,u_m)=\sum_{i\neq j}W^{ij}(u_i-u_j),
 $$
 with $W^{ij}\ge0$. Both satisfy the above assumptions, and therefore the comparison principle can be applied throughout the paper without further verification.
\bibliographystyle{plain}
\bibliography{references}
\end{document}